\newcolumntype{L}{>{\centering\arraybackslash}m{0.46\textwidth}}
\newtheorem{lemma}{Lemma}[section]
\newtheorem{theorem}{Theorem}[section]
\newtheorem{definition}{Definition}[section]
\newcommand{\eqarrayconf}{\arraycolsep=1.4pt\def\arraystretch{1.2}}
\newcommand{\F}{\boldsymbol{\mathsf{F}}}
\newcommand{\B}{\boldsymbol{\mathsf{B}}}
\newcommand{\K}{\boldsymbol{\mathsf{K}}}
\newcommand{\sK}{\tilde{\boldsymbol{\mathsf{K}}}}
\newcommand{\0}{\boldsymbol{\mathsf{0}}}
\newcommand{\rr}{\boldsymbol{r}}
\newcommand{\normal}{\boldsymbol{n}}
\newcommand{\x}{\boldsymbol{x}}
\newcommand{\gradient}{\boldsymbol\nabla}
\DeclareMathOperator{\spn}{span}
\newcommand{\half}{\frac{1}{2}}
\def\ij{{ij}} 
\newcommand{\bij}{{\boldsymbol{ij}}} 
\newcommand{\bji}{{\boldsymbol{ji}}}
\def\i{{\boldsymbol{i}}}
\def\j{{\boldsymbol{j}}}
\def\ix{{\i_{\x} }}
\def\itemp{{i_t}}
\newcommand{\conv}{\boldsymbol{\beta}}
\newcommand{\diff}{\mu}
\newcommand{\force}{g}
\newcommand{\linfty}{L^\infty}
\newcommand{\ltwo}{L^2}
\newcommand{\hone}{H^1}
\newcommand{\inflowboundary}{\Gamma_{\rm in}}
\newcommand{\outflowboundary}{\Gamma_{\rm out}}
\newcommand{\stdomain}{\mathcal{Q}}
\newcommand{\domain}{\Omega}
\newcommand{\side}{L}
\newcommand{\fespace}{V_h}
\newcommand{\testspace}{V_{h0}}
\newcommand{\knots}{{\xi}}
\newcommand{\nodes}{\mathcal{N}_h}
\newcommand{\neighborhood}[1][]{\mathcal{N}_h^{#1,s}}
\newcommand{\stneighborhood}[1][]{\mathcal{N}_h^{#1}}
\newcommand{\support}[1]{\mathcal{S}_h^{#1}}
\newcommand{\sneighborhood}[1][]{\neighborhood}
\newcommand{\stsymneigh}[1][]{\mathcal{N}^{{#1},\sym}_h}
\newcommand{\unk}{u_h}
\newcommand{\uboundary}{\overline{u}}
\newcommand{\test}{v_h}
\newcommand{\shapef}[1][]{\varphi_{#1}}
\newcommand{\smax}{\max{}_{\sigma_h}}
\newcommand{\detector}[1][]{\alpha_{#1}}
\newcommand{\graphl}{\ell}
\newcommand{\jump}[1]{\left\llbracket #1 \right\rrbracket}
\newcommand{\mean}[1]{%
  \sbox0{%
    \mathsurround=0pt 
    $\left\{\vphantom{#1}\right.\kern-\nulldelimiterspace$%
  }%
  \sbox2{\{}%
  \ifdim\ht0=\ht2
    \{\kern-.625\wd2 \{#1\}\kern-.625\wd2 \}%
  \else
    \left\{\kern-.7\wd0\left\{#1\right\}\kern-.7\wd0\right\}%
  \fi
}
\newcommand{\smthlimit}[1]{Z\left(#1\right)}
\newcommand{\absn}[2][\varepsilon_h]{\left\vert #2 \right\vert_{1,#1}}
\newcommand{\absd}[2][\varepsilon_h]{\left\vert #2 \right\vert_{2,#1}}
\newcommand{\smthdetector}[1][]{ %
	\ifx\empty#1\empty%
	\alpha_{\varepsilon_h} %
	\else %
	\alpha_{\varepsilon_h,#1} %
	\fi}
\newcommand{\smthspacedetector}[1][]{ %
	\ifx\empty#1\empty%
	\alpha_{\varepsilon_h}^s %
	\else %
	\alpha_{\varepsilon_h,#1}^s %
	\fi}
\newcommand{\smthtimedetector}[1][]{ %
	\ifx\empty#1\empty%
	\alpha_{\varepsilon_h}^t %
	\else %
	\alpha_{\varepsilon_h,#1}^t %
	\fi}
\newcommand{\smthstdetector}[1][]{ %
	\ifx\empty#1\empty%
	\alpha_{\varepsilon_h}^{st} %
	\else %
	\alpha_{\varepsilon_h,#1}^{st} %
	\fi}
\newcommand{\sdetectorAprox}[1][]{ %
 \ifx\empty#1\empty%
    \tilde{\alpha}_{\varepsilon_h} %
 \else %
    \tilde{\alpha}_{\varepsilon_h,#1} %
 \fi}
\newcommand{\artdif}{\nu}
\newcommand{\smthartdif}{\tilde{\nu}}
\newcommand{\sym}{{\rm sym}}
\newcommand{\correction}[1]{#1}
\def\bsbeta{\boldsymbol{\beta}}
\begin{document}

\title[Maximum-principle preserving space--time isogeometric analysis]{Maximum-principle preserving space--time isogeometric analysis}

\author{ Jes\'us Bonilla$^{\dag\ddag}$  \and Santiago Badia$^{\dag\ddag}$ }
\thanks{
	$^\dag$ CIMNE -- Centre Internacional de M\`etodes Num\`erics en Enginyeria, Esteve Terradas 5, 08860 Castelldefels,
	Spain (\{jbonilla,sbadia\}@cimne.upc.edu). \\ 
	\indent$^\ddag$
	Department of Civil and Environmental Engineering. Universitat Polit\`ecnica de Catalunya, Jordi Girona 1-3, Edifici C1, 08034
	Barcelona, Spain.
}

\renewcommand{\thefootnote}{\arabic{footnote}}

\begin{abstract} 
	In this work we propose a nonlinear stabilization technique for convection--diffusion--reaction and pure transport problems discretized with  space--time isogeometric analysis. The stabilization is based on a graph-theoretic artificial diffusion operator and a novel shock detector for isogeometric analysis.
	Stabilization in time and space directions are performed similarly, which allows us to use high-order discretizations in time without any CFL-like condition.
	The method is proven to yield solutions that satisfy the discrete maximum principle (DMP) unconditionally for arbitrary order. In addition, the stabilization is linearity preserving in a space--time sense. Moreover, the scheme is proven to be Lipschitz continuous ensuring that the nonlinear problem is well-posed.
	Solving large problems using a space--time discretization can become highly costly. Therefore, we also propose a partitioned space--time scheme that allows us to select the length of every time slab, and solve sequentially for every subdomain. As a result, the computational cost is reduced while the stability and convergence properties of the scheme remain unaltered. In addition, we propose a twice differentiable version of the stabilization scheme, which enjoys the same stability properties while the nonlinear convergence is significantly improved.
	Finally, the proposed schemes are assessed with numerical experiments. In particular, we considered steady and transient pure convection and convection-diffusion problems in one and two dimensions.
\end{abstract}

\maketitle

\noindent{\bf Keywords:} 
Isogeometric analysis, discrete maximum principle, monotonicity, high-order, space--time

\tableofcontents

\pagestyle{myheadings}
\thispagestyle{plain}

\begin{acronym}
  \acrodef{fe}[FE]{finite element}
  \acrodefplural{fe}[FEs]{finite elements}
	\acro{DOF}{degrees of freedom}
	\acro{SSP}{strong stability preserving}
	\acro{DMP}{discrete maximum principle}
	\acro{MP}{maximum principle}
	\acro{LED}{local extremum diminishing}
	\acro{DLED}{discrete local extremum diminishing}
\end{acronym}

\section{Introduction}
Many different applications in science and industry require solving problems satisfying some sort of positivity or \ac{MP} property. These include scalar transport problems, compressible flows, or fluid-based MHD simulations, among others. These problems are of particular interest in a variety of industries and scientific research areas, such as the chemical industry, aviation, aerospace, or nuclear fusion research, just to cite few examples.

Some of these problems exhibit a multiscale nature in time. In those cases, explicit methods are not suitable, since the smallest time scales pose very stringent stability conditions to the time step length, i.e., fully resolved time simulations are required. Thus, implicit methods are favored in applications where the smallest time scales are not of scientific or engineering interest.

As a result, schemes that preserve monotonicity (or at least positivity) for implicit time integration are of special interest. The standard technique to attain such schemes is adding nonlinear artificial diffusion (usually called shock capturing). The common ingredients of a shock capturing or nonlinear stabilization method are the following. The first ingredient is the \emph{artificial diffusion}, which needs to be sufficient to eliminate non-physical oscillations. The schemes in \cite{burman_nonlinear_2002,burman_nonlinear_2007,badia_monotonicity-preserving_2014,badia_discrete_2015} use an element-based artificial diffusion with a standard PDE-based diffusion operator. The drawback of this choice is the fact that the \ac{DMP} only holds under unpractical mesh restrictions. This problem has been solved by Guermond and Nazarov in \cite{guermond_entropy_2011,guermond_maximum-principle_2014} by replacing the PDE-based diffusion operator by an edge or graph-theoretic diffusion operator; see \cite{badia_monotonicity-preserving_2017,badia_differentiable_2017,kuzmin_gradient-based_2016,Lohmann2017,Mabuza2018} for schemes that preserve the \ac{DMP} on arbitrary meshes using a graph-Laplacian. The second ingredient is a \emph{shock detector}, which is the term responsible of deactivating the artificial diffusion in smooth regions. A good shock detector is of vital importance for minimizing the numerical diffusion while satisfying a \ac{DMP}. One example of shock detector is the one developed in 1D by Burman in \cite{burman_nonlinear_2007} and later extended to multiple dimensions by Badia and Hierro \cite{badia_monotonicity-preserving_2014}.  The last ingredient consists on perturbing the mass matrix. One option is a full lumping of the mass matrix, but it can lead to unacceptable phase errors. Instead, a nonlinear lumping is used, e.g., in \cite{badia_differentiable_2017,badia_monotonicity-preserving_2017}, using the same shock detector to lump the mass matrix. Other alternatives can be found in \cite{kuzmin_gradient-based_2016,guermond_correction_2013}. It is worth mentioning that all previous stabilization methods yield a very stiff nonlinear system of equations. In fact, some of the methods proposed in the literature are not even Lipschitz continuous and thus ill-posed (see \cite{barrenechea_edge-based_2016-1}). In practice, the nonlinear convergence of these methods is unacceptably slow, making hard its practical use. To solve this problem, Badia \emph{et al.} \cite{badia_differentiable_2017,badia_monotonicity-preserving_2017} have designed differentiable nonlinear stabilization terms, noticeably improving the nonlinear convergence.

The methods commented above have an algebraic nature and provide some type of \ac{DMP} for the nodal values. The monotonicity of the nodal values only translates into monotonic solutions if the FE space satisfies the convex hull property, which is only true in the first order case. As a result, using the ideas above it does not seem possible to design monotonic second or higher order methods. Recently, Kuzmin and coworkers \cite{Anderson2017,Lohmann2017}, have proposed instead the usage of Bernstein--B\`ezier \acp{fe}, since they satisfy the convex hull for high-order. However, the temporal dimension is discretized using Backward-Euler or \ac{SSP} Runge-Kutta methods (see \cite{ketcheson_optimal_2009}). In the first case, the problem is first order in time, whereas in the second case, a CFL-like condition arises \cite{kuzmin_flux_2002}, since high-order \ac{SSP} methods pose a restriction on the time step size similar to the ones in explicit methods \cite{ketcheson_optimal_2009}.

The main contribution of the present work is the development of a high-order (both in space and time) and \ac{DMP}-preserving discretization for the convection--diffusion--reaction and pure transport problems. This is achieved by combining the nonlinear stabilization techniques in \cite{badia_differentiable_2017,badia_monotonicity-preserving_2017} with a \emph{new shock detector} for arbitrary order space--time isogeometric analysis. 
Another novelty of this study is the stabilization in the time direction, which is performed in a similar manner as in space. This results in an unconditionally stable high-order method in time (and space). However, the space--time method requires to solve the whole space--time problem at once, which increases the computational cost. Hence, we also propose a partitioned approach in the temporal direction, where one can determine the width of the time slab to be computed every time. This strategy allows us to maintain a \emph{reasonable computational cost while having a high-order scheme in space and time, as well as satisfying the \ac{DMP} without any CFL-like condition}. Finally, we also propose a \emph{differentiable version} of the above scheme. This allows us to use Newton's method, which improves nonlinear convergence significantly. 
The method proposed in the present work has been implemented and tested making use of the \texttt{FEMPAR} library \cite{badia_fempar:_2017}.

This paper is structured as follows. First, we introduce the problem, its discretization, and monotonicity properties for scalar problems in Sect. \ref{sec.preliminaries}. Then, the stabilization techniques are introduced in Sect. \ref{sec:isotropic}. Sect. \ref{sec:time-integration} is devoted to the partitioned time integration scheme. Afterwards, we introduce a regularized version of the stabilization term in Sect. \ref{sec:differentiable-stabilization}. 
Finally, we show numerical experiments in Sect. \ref{sec.experiments} and draw some concluding remarks in Sect. \ref{sec.conclusions}.

\section{Preliminaries}\label{sec.preliminaries}
\subsection{Convection-Diffusion problem}
We consider a transient convection-diffusion problem with Dirichlet boundary conditions. Let $\domain \times (0,T) \doteq \prod_{\alpha=1}^{d+1} (0,\side_\alpha)$ be a $(d+1)$-cube, where $d$ is the number of spatial dimensions. Then, the problem reads:
\begin{equation}\label{continuous-problem}
\arraycolsep=1.4pt\def\arraystretch{1.1}
\left\{\begin{array}{rcll}
\partial_t u + \gradient\cdot(\conv u) - \gradient\cdot(\diff \gradient u) & = & \force & \text{in } \domain\times(0,T], \\
u(x,t) & = & \uboundary(x,t) & \text{on } \partial\domain\times(0,T] ,\\
u(x,0) & = & u_0(x) &  x\in\domain,
\end{array}
\right.
\end{equation}
where $\conv$ is a divergence-free convection velocity, $\correction{\diff\geq 0}$ is a scalar constant diffusion, and $\force(\x,t)$ is the body force. In the case of pure convection ($\diff=0$), boundary conditions are only imposed at the inflow $\inflowboundary\doteq\{\x\in\partial\domain : \correction{\conv\cdot\normal_{\partial\domain}<0}\}$, where $\normal_{\partial\domain}$ is a unit vector outward-pointing normal to the boundary. We also define the outflow boundary as $\outflowboundary\doteq\partial\domain\backslash\inflowboundary$. Moreover, we will also consider the steady problem, which is obtained by dropping the time derivative term and the initial \correction{condition}. It is important to mention that a reaction term can be included without harming any of the properties satisfied by the schemes introduced below. However, a convection--diffusion--reaction problem only satisfies a \ac{MP} if the minimum is negative and the maximum positive (analogously for its proposed discretizations). In other words, it only satisfies a weak \ac{MP}, see \cite{barrenechea_edge-based_2016-1}. In order to simplify the discussion below, we will limit the present work to pure convection and convection--diffusion problems.

\correction{In order to avoid technicalities and facilitate the exposition of the stabilization method, we restrict this work to cubic domains. However, it is possible to work with complex geometries using standard procedures from isogeometric analysis \cite{Cottrell2009}. E.g., a complex geometry would be divided in several parts, which would be mapped to multiple $d$-cubic patches. The stabilization method presented in this work is independent from this procedure.}

\subsection{Discretization}\label{sec.discretization}
In this work, we consider a standard B-spline discretization with interpolative boundaries (see \cite{Cottrell2009}). A spline of order $p$ in the variable $x$ is a piecewise polynomial function in $x$ of degree $p$. The values of $x$ in which different polynomials meet are called \emph{knots}. Knots might be placed at the same location, i.e. can be repeated. When the knots are not repeated, the first $p-1$ derivatives of the spline are continuous. When a knot is repeated $r$ times, only the first $p-r$ derivatives are continuous across that knot. Knots are sorted in increasing order and collected in the so called \emph{knot vector} $\{\knots_1, \knots_2 , \ldots\}$. Given a knot vector, B-splines of order $p$ are basis functions for spline functions of the same order. B-splines are constructed in a recursive way using the Cox--de Boor formula:
\begin{equation}
  B_{i}^0(x) \doteq \left\{\begin{array}{ll}
1 & \text{ if }\ \knots_i\leq\knots<\knots_{i+1} \\
0 & \text{ otherwise}
\end{array},\right. \qquad
B_{i}^k(x) \doteq \dfrac{x-\knots_i}{\knots_{i+k}-\knots_i} B_{i}^{k-1}(x) + 
\dfrac{\knots_{i+k+1}-x}{\knots_{i+k+1}-\knots_{i+1}} B_{i+1}^{k-1}(x), 
\end{equation}
for $k=1,...,p$. By construction, $B_i^p(x)$ \correction{has compact support}, is non-negative, and non-zero in $[\knots_i,\knots_{i+p+1}]$. \correction{Notice that its support increases with the degree of the polynomial}.

Let us consider the domain $[0,L]$ and the uniform partition into $m$ sub-intervals of size $h = L / m$. 
The \emph{open} knot vector $\{ \knots_1, \ldots, \knots_{m+2p+1}\}$ is defined as follows. The first $p+1$ knots are located at zero, i.e., $\xi_{1} = \ldots = \xi_{p+1} = 0$. The last $p+1$ knots are located at $L$, i.e., $\xi_{m+p+1} = \ldots = \xi_{m+2p+1}=L$. The interior points are equi-distributed, with $\xi_{i} = (i-p-1) h$, for $i=p+1,\ldots,m+p+1$.  
It leads to a basis $B_i^p(x)$ (for $i=1, \ldots, m+p$) for a space of splines in $[0,L]$ and a partition of unity, i.e., $\sum_{i=1}^m B_i^p(x) = 1$ for $x \in [0,L]$. Any spline $v(x)$ of order $p$ in $[0,L]$ can uniquely be defined by the \emph{control points} $(v_1, \ldots, v_{m+p}) \in \mathbb{R}^{m+d}$ as the linear combination of B-splines $v(x) = \sum_{i=1}^{m+p} B_i^p(x) v_i$. In one dimension, the basis functions obtained from an open knot vector are interpolatory at the extremes, i.e., $v(0) = v_1$ and $v(L) = v_{m+p+1}$ (see Fig. \ref{fig:bspline}). For a first order polynomial $v$ in $[0,L]$, it holds $v(x) = \sum_{i=1}^ {m} B_i^p(x) v(x_i)$, where $x_i \doteq (\knots_{i+1} + \ldots + \knots_{i+p}) / p$ are called the \emph{Greville abscissae} \cite{DeBoor1986,Marsden1970}.

Let us consider the number of partitions per dimension with $m_\alpha$, for $\alpha = 1, \ldots, d+1$. We represent with $\nodes$ the set of multi-indices $\i \doteq (i_1,...,i_{d+1}) \in \mathbb{Z}^{d+1}$ with  $i_\alpha \in \{1, \ldots, m_\alpha + p\}$. Every $\i \in \nodes$ can be expressed as $(\ix,\itemp)$, where $\ix$ is the spatial index and $\itemp$ is the temporal index. The $(d+1)$-dimensional B-spline is defined as the tensor product of $d+1$ unidimensional B-splines ${B}_\i^p(\boldsymbol{x})\doteq B_{i_1}^p(x_1)\times\cdots\times B_{i_{d+1}}^p(x_{d+1})$. Notice that a Greville abscissa in the case of a multidimensional spline reads $\x_\i = (x_{\i_1},...,x_{\i_{d+1}})$. 

We define the space of splines $\fespace \doteq \spn\{{B}_{\i}^p(\x) \ :\ \i\in\nodes\}$.
We use the notation $\shapef[\i]\equiv B_\i^p$. The order is omitted since it is assumed to be fixed. Thus, every spline $\test \in \fespace$ can be written as $\test=\sum_{\i\in\nodes}\shapef[\i]v_\i$. 
Furthermore, we define the following sets of indices, which are useful for the definition of the forthcoming schemes. The set of neighbors of $\i$ is defined as $\stneighborhood[\i] \doteq \{\j\in\nodes : |\i-\j|_\infty \leq 1\}$. We define as $\support{\i}\doteq \{\j\in\nodes : |\i-\j|_\infty \leq p\}$ the set of indices whose associated shape functions intersect with the support of $\shapef[\i]$.

\begin{figure}[h]
	\centering
	\includegraphics[width=0.85\textwidth]{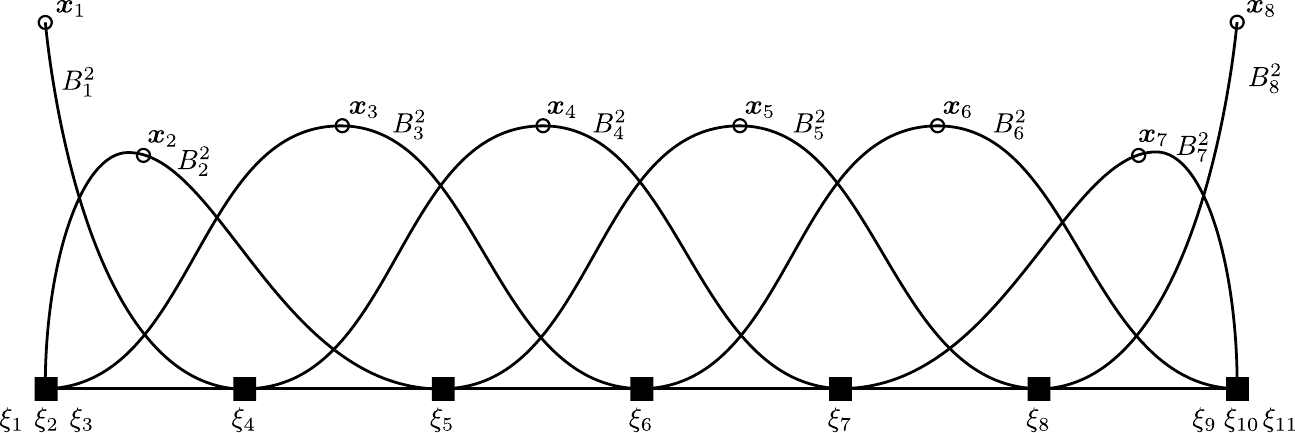}
	\caption{Representation of the basis functions of $\fespace^2$ in one dimension, with its associated Greville abscissae.}
	\label{fig:bspline}
\end{figure}

We use standard notation for Sobolev spaces. The $L^2(\omega)$ scalar product is denoted by $(\cdot,\cdot)_\omega$ for $\omega\subset\domain$. However, we omit the subscript for $\omega\equiv\domain$. The $L^2(\domain)$ norm is denoted by $\Vert\cdot\Vert$. 

\subsection{Discrete problem}
The weak form of \eqref{continuous-problem} using the Galerkin method reads: find $\unk\in\fespace$ such that $\unk(\x,t)=\uboundary_h(\x,t)$ on $\partial\domain \times (0,T]$, $\unk(\x,0) = u_{0h}(\x)$ on $\domain \times \{0\}$, and 
\begin{equation} \label{eq.discrete-problem}
(\partial_t\unk,\test) + (\conv\cdot\gradient\unk,\test) + \diff(\gradient\unk,\gradient\test) = (\force,\test),\qquad \forall \test\in\fespace,
\end{equation}
where $\uboundary_h(t)$ and $u_{0h}$ are projections of $\uboundary(t)$ and $u_0$ to $\fespace$, respectively, such that the local \ac{DMP} is satisfied (see Def. \ref{def.local-dmp}).
Furthermore, we can rewrite the previous discrete problem in matrix form as $\K_\bij u_\j = \F_\i$, where $\K_\bij \doteq (\partial_t\shapef[\j],\shapef[\i]) + (\conv\cdot\gradient\shapef[\j],\shapef[\i]) + \diff(\gradient\shapef[\j],\gradient\shapef[\i])$, and $\F_\i\doteq(\force,\shapef[\i])$ for $\i,\j\in\nodes$. Notice that we have not applied the boundary conditions yet. To apply boundary conditions the space of test functions is restricted to $\test\in\testspace$, and the force vector is redefined as $\F_\i\doteq(\force,\shapef[\i])-(\partial_t\uboundary_h,\shapef[\i]) - {(\conv\cdot\gradient\uboundary_h,\shapef[\i])} - \diff(\gradient\uboundary_h,\gradient\shapef[\i])$.

\subsection{Monotonicity properties}\label{sec.monotonicity-prop}

In this section we define all the properties that we demand our scheme to fulfill. In this case, since we are using a space--time discretization, it becomes more useful to define these properties in a space--time sense. This means that the variation of $\unk$ in the temporal direction will also be taken into account to define an extremum. Hence, we define the concept of a local discrete extremum as follows.
\begin{definition}[Local Discrete Extremum]\label{def.extremum}
	 The function $\unk\in\fespace$ has a local discrete minimum (resp. maximum) on $\i\in\nodes$, if $u_\i\leq u_\j$ (resp. $u_\i\geq u_\j$) $\forall \j \in\stneighborhood[\i]$.
\end{definition}

\correction{For problems that satisfy a maximum principle, e.g., problem \eqref{eq.discrete-problem} with $g=0$}, it is also important to define the concepts of local and global space--time \ac{DMP}. The latter is a slightly weaker property than the former, but it is more useful in the present study. A local \ac{DMP} is a stronger property because it implies that no oscillations can appear, while the global \ac{DMP} only implies that the global extrema are located at the boundary conditions. 
\begin{definition}[Local space--time \ac{DMP}]\label{def.local-dmp}
	A solution $\unk\in\fespace$ satisfies the local discrete maximum principle if for every $\i\in\nodes$
	\begin{equation}
	\min_{\j\in\stneighborhood[\i]\backslash\{\i\}} u_\j \leq u_\i \leq \max_{\j\in\stneighborhood[\i]\backslash\{\i\}} u_\j.
	\end{equation}
\end{definition}

\begin{definition}[Global space--time \ac{DMP}]\label{def.global-dmp}
	A solution $\unk\in\fespace$ satisfies the global discrete maximum principle if the global extrema are located at boundary conditions, i.e., for every $\i\in\nodes$
	\begin{equation}
	\correction{\min \left(\min_{\x \in \partial \Omega, \  t\in [0,T)} \uboundary_h(\x,t),\ \min_{\x \in \Omega} u_{h0}(\x)  \right) \leq u_\i \leq \max \left( \max_{\x \in \partial \Omega, \ t\in [0,T)} \uboundary_h(\x,t),\ \max_{\x \in \Omega} u_{h0}(\x)\right).}
	\end{equation}

\end{definition}

Finally, let us recall the definition of linearity-preservation, which is a desired property to achieve high-order convergence in smooth regions (see \cite{kuzmin_constrained_2009}). 

\begin{definition}[Linearity-preservation]\label{def.lp}
	A stabilization term, $\B_\bij(\unk)$, is said to be linearity-preserving if, for a solution that is linear in \correction{all directions in the neighborhood of $\x_\i$}, then the stabilization term becomes null, i.e., \correction{$\B_\bij(\unk)=0$ if $\unk(\x)\in\mathcal{P}_1(\domain_\i)$ where $\domain_\i$ is the convex hull defined by the set of neighboring Greville abscissae $\{\x_\j\}_{\j\in \stneighborhood[\boldsymbol{k}],\ \boldsymbol{k}\in\support{\i}}$.}
\end{definition}

\section{Lipschitz-continuous nonlinear stabilization}\label{sec:isotropic}
In this section we define a nonlinear stabilization operator, $B_h(w_h;\unk,\test)$, to be added to the discrete problem \eqref{eq.discrete-problem}, such that it satisfies at least the global \ac{DMP} in Def. \ref{def.global-dmp}. Let us define $\B_\bij(\unk)\doteq B_h(\unk;\shapef[\j],\shapef[\i])$. We also enforce that, for any $\unk\in\fespace$, $\B_\bij(\unk)$ 
\begin{enumerate}
	\item has compact support: $\B_\bij(\unk)=0$ if $\j\not\in\support{\i}$,
	\item is symmetric: $\B_\bij(\unk)=\B_\bji(\unk)$,
	\item is conservative: $\sum_{\j\in \support{\i}\backslash\{\i\}} \B_\bij(\unk)=-\B_{\i\i}(\unk)$.
\end{enumerate}
In order to achieve these requirements, we recall the stabilization term in \cite{badia_monotonicity-preserving_2017}, which is defined as 

\begin{equation}\label{eq.stab-term}
B_h(w_h;\unk,\test)\doteq \sum_{\i\in\nodes}\sum_{\j\in\support{\i}} \artdif_\bij(w_h) v_\i u_\j \graphl(\i,\j),
\end{equation}
for any $w_h, \ \unk, \ \test\in\fespace$. Here, $\graphl$ is the graph-Laplacian operator defined as $\graphl(\i,\j)=2\delta_\bij-1$ (see \cite{guermond_second-order_2014,badia_monotonicity-preserving_2017}), and $\artdif_\bij(w_h)$ is the artificial diffusion defined as
\begin{align}\label{eq.artificial-diffusion}
\artdif_\bij(w_h)&\doteq\max\{\detector[\i](w_h)\K_\bij,0,\detector[\j](w_h)\K_\bji\} \quad \text{for}\quad \j\in\support{\i}\backslash\{\i\}, \\
\artdif_{\i\i}(w_h)&\doteq\displaystyle\sum_{\j\in\support{\i}\backslash\{\i\}} \artdif_\bij(w_h).
\end{align}
We denote by $\detector(w_h)$ the shock detector used for computing the artificial diffusion parameter. The idea behind the definition of this detector is to ensure that the global \ac{DMP} defined in Def. \ref{def.global-dmp} is satisfied using a minimal amount of artificial diffusion, i.e., the lower admissible value of $\artdif_\bij$. A shock detector must be a positive real number, which takes value 1 when $\unk(\x_\i)$ is an inadmissible value of $\unk$ (i.e., local discrete extremum) and smaller than 1 otherwise; to have linearity preservation (see Def. \ref{def.lp}), it must be equal to 0 for $\unk\in\mathcal{P}_1(\Omega\times (0,T])$. In this section, we propose an isotropic approach for $\detector[\i](w_h)$, which consists in using the shock detector in \cite{badia_monotonicity-preserving_2017} in all directions (including time).

\begin{figure}[h]
	\centering
	\includegraphics[width=0.25\textwidth]{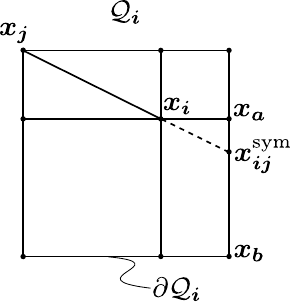}
	\caption{Representation of the polytope $\stdomain_i$ in two dimensions, the symmetric node $\x_\bij^{\rm sym}$ of $\x_\j$ with respect to $\x_\i$, $\x_a$ and $\x_b$.}
	\label{fig:usym}
\end{figure}

In order to introduce the shock detector, let us recall some useful notation from \cite{badia_monotonicity-preserving_2017}. Let $\rr_{\bij} = \x_\j - \x_\i$ be the vector pointing from Greville abscissae $\x_\i$ to $\x_\j$ with $\i,\j\in\nodes$ and $\hat{\rr}_{\bij} \doteq \frac{\rr_{\bij}}{|\rr_{\bij}|}$. Let us take the set of Greville abscissae $\x_\j$ for $\j\in\stneighborhood[i]\backslash\{\i\}$ as vertices of a polytope in $d+1$ dimensions. In particular, let us name this polytope $\stdomain_\i$. Let $\x_\bij^\sym$ be the point at the intersection between $\partial\stdomain_\i$ and the line that passes through $\x_\i$ and $\x_\j$ that is not $\x_\j$ (see Fig. \ref{fig:usym}). The set of all $\x_\bij^\sym$ for all $\j\in\stneighborhood[\i]\backslash\{\i\}$ is represented with $\stsymneigh[\i]$. We define $\rr_\bij^\sym \doteq \x_\bij^\sym - \x_\i$. Given $\x^\sym_\bij$ in two dimensions, let as call $\boldsymbol{a}$ and $\boldsymbol{b}$ the indices of the vertices such that they define the edge in $\partial\stdomain_\i$ that contains $\x_\bij^\sym$. We define $u_\j^\sym$ as the linear interpolation of $u_{\boldsymbol{a}}$ and $u_{\boldsymbol{b}}$ at $\x_\bij$, i.e. $u_\bij^\sym\doteq u_{\boldsymbol{a}}\frac{\x_{\boldsymbol{b}}-\x_\bij^\sym}{\x_{\boldsymbol{b}}-\x_{\boldsymbol{a}}} + u_{\boldsymbol{b}}\frac{\x_\bij^\sym-\x_{\boldsymbol{a}}}{\x_{\boldsymbol{b}}-\x_{\boldsymbol{a}}}$. For higher dimensions, $u_\bij^\sym$ is defined analogously. Given the facet of $\partial\stdomain_i$ where $\x_\bij^\sym$ lies, $u_\bij^\sym$ is the linear interpolation at $\x_\bij^\sym$ of the control points whose Greville abscissae are at the same facet.

Notice that it is essential to use Greville abscissae since they satisfy that for a linear function $\unk\in\mathcal{P}_1$, $\unk(\x_\i)=u_\i$. Therefore, one can construct easily linear approximations of the unknown gradients that are exact for $\unk\in\mathcal{P}_1$. Furthermore,
one can define the jump and the mean of a linear approximation of the unknown gradient at Greville abscissa $\x_\i$ in direction $\rr_\bij$ as
\begin{equation}\label{jump}
\jump{\gradient \unk}_\bij \doteq \frac{u_\j - u_\i}{|\rr_\bij|} + \frac{u_\j^\sym - u_\i}{|\rr_\bij^\sym|}, 
\end{equation}
\begin{equation}\label{mean}
\mean{|\gradient \unk\cdot \hat{\rr}_\bij|}_{\bij} \doteq \frac{1}{2}
\left(\frac{|u_\j-u_\i|}{|\rr_\bij|}+\frac{|u_\j^\sym-u_\i|}{|\rr_\bij^\sym|}\right).
\end{equation}
In the present work we will use the same shock detector developed in \cite{badia_monotonicity-preserving_2017}, which reads
\begin{equation}\label{eq.iso-detector}
\detector[\i](\unk)\doteq\left\{\begin{array}{cc}
\left[\dfrac{\left|\sum_{\j\in\stneighborhood[\i]} \jump{\gradient\unk}_\bij\right|}{\sum_{\j\in\stneighborhood[\i]}2\mean{\left|\gradient\unk\cdot\hat{\rr}_\bij\right|}_\bij}\right]^q & \text{if}\quad \sum_{\j\in\stneighborhood[\i]} \mean{\left|\gradient\cdot\hat{\rr}_\bij\right|}_\bij \neq 0 \\
0 & \text{otherwise}
\end{array}
\right. .
\end{equation}
From Lm. 3.1 in \cite{badia_monotonicity-preserving_2017} we know that \eqref{eq.iso-detector} is valued between 0 and 1, and it is only equal to one if $\unk(\x_i)$ is a local discrete extremum (in a space--time sense as in Def. \ref{def.extremum}).
Since the linear approximations of the unknown gradients are exact for $\unk\in\mathcal{P}_1$, the shock detector vanishes when the solution is linear in all dimensions (including time). This result follows directly from \cite[Th. 4.5]{badia_monotonicity-preserving_2017}.

Supplementing the discrete problem \eqref{eq.discrete-problem} with the above stabilization term, the stabilized problem reads: Find $\unk\in\fespace$ such that $\unk=\uboundary_h$ on $\partial\domain$, $\unk=u_{0h}$ at $t=0$, and
\begin{equation}\label{eq.stabilized-problem}
(\partial_t\unk,\test) + (\conv\cdot\gradient\unk,\test) + \diff(\gradient\unk,\gradient\test)+B_h(\unk;\unk,\test) = (\force,\test),\qquad \forall \test\in\fespace,
\end{equation}
which in turn can be expressed in matrix form as
\begin{equation}\label{eq.matrix-problem}
\sK_\bij(\unk)u_\j = \F_\i,
\end{equation}
where $\sK_\bij(\unk) \doteq \K_\bij + \B_\bij(\unk)$ for $\i,\j\in\nodes$.

\begin{theorem}[\ac{DMP}]\label{thm.genDMP}
	The solution of the discrete problem \eqref{eq.matrix-problem} using the shock detector \eqref{eq.iso-detector} satisfies the global \ac{DMP} in Def. \ref{def.global-dmp} if $\force = 0$ and, for every control point $\i\in\nodes$ such that $u_\i$ is a local discrete extremum, it holds:
	\begin{equation}\label{dmp-cond}
	\sK_\bij(\unk) \leq 0,  \,\forall \, \j \in \support{\i} \backslash\{\i\},\qquad \sum_{\j\in\support{\i}} \sK_\bij(\unk)  = 0.
	\end{equation}
	Moreover, the resulting scheme is linearity-preserving as defined in Def. \ref{def.lp}, i.e. $\B_\ij(\unk)=0$ for $\unk\in\mathcal{P}_1$.
\end{theorem}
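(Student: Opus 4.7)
The plan is to split the proof into two pieces: (i) the global DMP, and (ii) linearity preservation. For (i) I would argue by contradiction. Assume the global maximum $M^\star := \max_\j u_\j$ strictly exceeds $M := \max(\max_{\partial\Omega\times[0,T)}\uboundary_h,\,\max_\Omega u_{h0})$, and select an interior index $\i^*\in\nodes$ with $u_{\i^*}=M^\star$. Then $u_{\i^*}\geq u_\j$ for every $\j\in\stneighborhood[\i^*]$, so $\i^*$ is a local discrete extremum in the sense of Def. \ref{def.extremum}, and the hypothesis (dmp-cond) is active at $\i^*$.

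The core algebraic step combines $\force=0$ with the vanishing row sum. From $\sum_\j\sK_{\i^*\j}(\unk)u_\j = F_{\i^*} = 0$ and $u_{\i^*}\sum_\j\sK_{\i^*\j}(\unk)=0$, one obtains
\begin{equation}
\sum_{\j\in\support{\i^*}\setminus\{\i^*\}} \sK_{\i^*\j}(\unk)\,(u_\j - u_{\i^*}) = 0.
\end{equation}
Each summand is the product of a non-positive $\sK_{\i^*\j}$ and a non-positive $u_\j - u_{\i^*}$, so it is non-negative and therefore must vanish individually. Hence $u_\j = M^\star$ for every $\j\in\support{\i^*}$ with $\sK_{\i^*\j}(\unk)<0$. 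Each such interior $\j$ is again a global maximum and hence a local extremum at which (dmp-cond) applies, so the same argument propagates. Following the connected space--time B-spline stencil, the chain of equalities must eventually terminate at an index on $\partial\Omega$ or at $t=0$, forcing the corresponding boundary or initial value to equal $M^\star>M$, contradicting the definition of $M$. The lower bound follows by applying the same reasoning to $-\unk$.

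I expect the propagation step to be the main obstacle: it requires the graph $\{(\i,\j): \sK_{\i\j}(\unk)<0\}$ to connect every interior index to the boundary/initial stratum. I would address it through the structure of the artificial diffusion \eqref{eq.artificial-diffusion}: at an extremum $\detector[\i^*]=1$, so $\sK_{\i^*\j}=\K_{\i^*\j}-\artdif_{\i^*\j}\leq 0$, with strict inequality whenever the Galerkin coupling $\K$ provides a genuine convective or diffusive link to $\j$. Combined with the non-degeneracy of the overlapping-B-spline stencil and the fact that it is connected to the boundary from every interior node, this rules out a propagation chain that terminates strictly in the interior.

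Linearity preservation is then direct. If $\unk\in\mathcal{P}_1(\domain_\i)$ on the local patch required by Def. \ref{def.lp}, then by construction of $\x_\bij^\sym$ one has $\hat\rr_\bij^\sym = -\hat\rr_\bij$, so the jump formula \eqref{jump} reduces to $\jump{\gradient\unk}_\bij = \gradient\unk\cdot(\hat\rr_\bij + \hat\rr_\bij^\sym) = 0$ at every neighbour $\j$. The numerator in \eqref{eq.iso-detector} therefore vanishes, giving $\detector[\i](\unk)=0$, whence $\artdif_\bij(\unk)=0$ and $\B_\bij(\unk)=0$ throughout the relevant stencil. This is precisely the content invoked from [Th.~4.5] of \cite{badia_monotonicity-preserving_2017}.
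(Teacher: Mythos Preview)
Your proposal is correct and follows essentially the same strategy as the paper. The only cosmetic difference is that the paper phrases the key step as a convex-combination identity, writing $u_\i = -\sum_{\j\neq\i}\sK_{\i\j}u_\j/\sK_{\i\i}$ and arguing that a maximum realized as a convex combination of its neighbours must coincide with at least one of them, then iterating; your sign identity $\sum_{\j\neq\i^*}\sK_{\i^*\j}(u_\j-u_{\i^*})=0$ with each summand non-negative is the same argument in disguise. The propagation/connectivity concern you flag is treated no more rigorously in the paper---it simply writes ``by induction''---so your proof is at least as complete on that point, and your linearity-preservation argument is a direct unpacking of what the paper cites from \cite{badia_monotonicity-preserving_2017}.
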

\begin{proof}
	Let us assume that $u_\i$ is a discrete maximum. Then, \eqref{eq.matrix-problem}, for $\force=0$ and before applying boundary conditions, reads 
	\begin{equation}
	\sum_{\j\in\support{\i}} \sK_\bij(\unk)u_\j = \0.
	\end{equation}
	
	Therefore, $u_\i$ can be computed as
	\begin{equation}
	u_\i = \frac{\sum_{\j\in\support{\i}\backslash\{\i\}} \sK_\bij(\unk)u_\j}{\sK_{\i\i}(\unk)} .
	\end{equation}
	Since $u_\i$ is an extremum, which implies $\detector[\i]=1$, the stabilization term ensures \eqref{dmp-cond} by construction.
	Hence, the coefficients 
	that multiply $u_\j$ are in $[0,1]$, and the sum of all 
	these coefficients add up to one. Therefore, $u_\i$ is a convex
	combination of its neighbors (including boundary conditions
	$\uboundary$). Since $u_\i$ is a maximum and a convex combination of its
	neighbors, then $u_{\boldsymbol{k}}=u_\i$ for some $\boldsymbol{k}\in\support{i}$. \correction{In that case, we can write
	\begin{equation}
	u_\i = \frac{\sK_{\i\boldsymbol{k}}(\unk)u_{\boldsymbol{k}}}{\sK_{\i\i}(\unk)} + \frac{\sum_{\j\in\support{\i}\backslash\{\i,\boldsymbol{k}\}} \sK_\bij(\unk)u_\j}{\sK_{\i\i}(\unk)}.
	\end{equation}
	Since $u_{\boldsymbol{k}}=u_\i$,
	\begin{equation}
	\left(1-\frac{\sK_{\i\boldsymbol{k}}(\unk)}{\sK_{\i\i}(\unk)}\right)u_\i =  \frac{\sum_{\j\in\support{\i}\backslash\{\i,\boldsymbol{k}\}} \sK_\bij(\unk)u_\j}{\sK_{\i\i}(\unk)} .
	\end{equation}
	Therefore, it can also be proved that $u_\i$ is a convex combination of all its
	neighbors \emph{but} $u_{\boldsymbol{k}}$,
	\begin{equation}
	u_\i =  \frac{\sum_{\j\in\support{\i}\backslash\{\i,\boldsymbol{k}\}} \sK_\bij(\unk)u_\j}{\left(1-\frac{\sK_{\i\boldsymbol{k}}(\unk)}{\sK_{\i\i}(\unk)}\right)\sK_{\i\i}(\unk)} = \frac{\sum_{\j\in\support{\i}\backslash\{\i,\boldsymbol{k}\}} \sK_\bij(\unk)u_\j}{\sK_{\i\i}(\unk)- \sK_{\i\boldsymbol{k}}(\unk)} .
	\end{equation}
	Proceeding analogously, one can also prove that $u_{\boldsymbol{k}}$ 
	is a convex combination of all its neighbors \emph{but} $u_\i$. Hence, 
	we know that the value of $u_\i=u_{\boldsymbol{k}}$ is bounded by all
	their neighbors. At this point, the same reasoning can be applied to any of
	their neighbors. Thus,} by induction, we know that extrema at any control point 
	are bounded by the boundary conditions. Thus, the global \ac{DMP} is satisfied.
	
	From \cite[Th. 4.5]{badia_monotonicity-preserving_2017}, \correction{$\detector[\j]=0$ for any $\j\in\support{\i}$ if $\unk\in\mathcal{P}_1(\domain_\i)$ where $\domain_\i$ is the convex hull defined by the set of neighboring Greville abscissae $\{\x_\j\}_{\j\in \stneighborhood[\boldsymbol{k}],\ \boldsymbol{k}\in\support{\i}}$. By definition, the stabilization term also vanishes if $\detector[\j]=0$ for $\j\in\support{\i}$ (see \eqref{eq.stab-term} and \eqref{eq.artificial-diffusion})}. Therefore,  the scheme is linearity-preserving as defined in Def. \ref{def.lp}.	
\end{proof}
\begin{theorem}
	The diffusion defined in \eqref{eq.artificial-diffusion} introduces the minimal amount of artificial dissipation such that condition  \eqref{dmp-cond} is satisfied when $q\rightarrow\infty$.
\end{theorem}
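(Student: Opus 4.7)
The plan is to show two things: that condition \eqref{dmp-cond} forces a pointwise lower bound on $\artdif_\bij$, and that the formula \eqref{eq.artificial-diffusion} attains this lower bound exactly in the limit $q\to\infty$.

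First, I would derive the lower bound. Since $\graphl(\i,\j)=2\delta_\bij-1$, we have $\B_\bij=-\artdif_\bij$ for $\j\neq\i$, so $\sK_\bij=\K_\bij-\artdif_\bij$. The inequality part of \eqref{dmp-cond}, applied at an extremum $\i$, therefore requires $\artdif_\bij\geq \K_\bij$; combined with the standing nonnegativity of the artificial diffusion, this yields $\artdif_\bij\geq\max\{\K_\bij,0\}$. Because $\artdif$ must be symmetric in $\i,\j$ (property 2), the same reasoning applied at $\j$ whenever $\j$ is also a local discrete extremum forces $\artdif_\bij\geq\max\{\K_\bji,0\}$. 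Writing $\chi_\i\in\{0,1\}$ for the indicator that $u_\i$ is a space--time local discrete extremum in the sense of Def.~\ref{def.extremum}, the pointwise smallest admissible value is therefore
\begin{equation*}
\artdif_\bij^{\min}\doteq\max\{\chi_\i\K_\bij,\;0,\;\chi_\j\K_\bji\},
\end{equation*}
any strictly smaller choice violating \eqref{dmp-cond} at $\i$ or at $\j$. The row-sum part of \eqref{dmp-cond} imposes no further constraint on $\artdif_\bij$, since the conservation property of $\B$ gives $\sum_{\j\in\support{\i}}\sK_\bij=\sum_{\j\in\support{\i}}\K_\bij$, which vanishes by the partition-of-unity property of the B-spline basis (and the fact that $\gradient$ and $\partial_t$ annihilate constants).

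Second, I would pass to the limit $q\to\infty$ in \eqref{eq.artificial-diffusion}. By Lm.~3.1 of \cite{badia_monotonicity-preserving_2017}, the base of the exponent in \eqref{eq.iso-detector} lies in $[0,1]$ and equals $1$ if and only if $u_\i$ is a local discrete extremum. Hence $\detector[\i](\unk)\to\chi_\i$ pointwise as $q\to\infty$, and substituting this into \eqref{eq.artificial-diffusion} yields $\artdif_\bij(\unk)\to\max\{\chi_\i\K_\bij,\,0,\,\chi_\j\K_\bji\}=\artdif_\bij^{\min}$. Combined with the lower bound, this proves the claim.

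The only delicate bookkeeping is that at finite $q$ the third slot $\detector[\j]\K_\bji$ is always active, adding diffusion strictly above what \eqref{dmp-cond} at $\i$ alone would require whenever $\j$ is not itself an extremum. This is precisely why the theorem is stated in the limit $q\to\infty$: only then do the non-extremal detectors vanish and allow $\artdif_\bij$ to collapse to the minimum. I do not anticipate any serious technical obstacle beyond carefully tracking which detector is evaluated at which node.
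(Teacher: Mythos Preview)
The paper does not actually supply a proof of this theorem: it only states that ``the proof follows the same lines as in \cite[Th.~4.4]{badia_monotonicity-preserving_2017}'' and omits the details. Your two-step argument---first extracting from \eqref{dmp-cond}, symmetry, and nonnegativity the pointwise lower bound $\artdif_\bij\geq\max\{\chi_\i\K_\bij,0,\chi_\j\K_\bji\}$, then using the fact that the base of \eqref{eq.iso-detector} lies in $[0,1]$ and equals $1$ exactly at local discrete extrema to conclude $\detector[\i]\to\chi_\i$ as $q\to\infty$---is correct and is precisely the natural argument one expects the cited reference to contain. Your observation that the row-sum condition in \eqref{dmp-cond} is automatic (via conservation of $\B$ and partition of unity of the B-spline basis) is also right and worth keeping, since it explains why only the off-diagonal sign condition drives the minimality bound.
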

\begin{proof}
	The proof follows the same lines as in \cite[Th. 4.4]{badia_monotonicity-preserving_2017}. We do not include it for the sake of conciseness.
\end{proof}

Finally, we prove Lipschitz continuity of the stabilization term. 
In this particular case, the proof follows the same reasoning as in \cite{badia_monotonicity-preserving_2017}. \correction{Let us recall the definition of the semi-norm generated by the graph-Laplacian required to show Lipschitz continuity,
\begin{equation}
|w|_\ell \doteq \sqrt{\half\sum_{\i\in\stneighborhood}\sum_{j\in\support{\i}} (w_\i - w_\j)^2 }.
\end{equation}
In addition, we will also need the $\ltwo(\domain)$ norm denoted as $\|\cdot\|$ and the $\linfty(\domain)$ expressed as $\|\cdot\|_{\infty}$.}
We do not include all details for the sake of conciseness and refer the reader to the previously cited work. 

\begin{theorem}\label{thm.lipcont}
	\correction{Let $\fespace^{\rm adm}\subset\fespace$ be the subspace of functions that satisfy the global \ac{DMP} in Def.\ \ref{def.global-dmp}}, then $\B(\cdot)$ with the shock detector \eqref{eq.iso-detector} is Lipschitz continuous in $\fespace^{\rm adm}$ for $\unk\in\fespace$ and bounded $q$, since \correction{
	\begin{equation}
	(\B(u)-\B(v),z) \leq Cq(h^d+\|\conv\|_{\infty}h^{d-1}\delta t+\diff h^{d-2}\delta t)|u-v|_\ell |z|_\ell 
	\end{equation}}
	is satisfied.
\end{theorem}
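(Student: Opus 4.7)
The plan is to follow the blueprint of the Lipschitz continuity proof in \cite{badia_monotonicity-preserving_2017}, and highlight the modifications needed to account for the space--time isogeometric setting (a $(d+1)$-dimensional tensor-product B-spline basis instead of a purely spatial \ac{fe} basis) and for the fact that $\K$ now contains a temporal derivative term. First, I would use the symmetry $\B_\bij=\B_\bji$ and the conservation property $\sum_{\j\in\support{\i}\setminus\{\i\}}\B_\bij=-\B_{\i\i}$ to rewrite
\begin{equation}
(\B(u)-\B(v),z) = \tfrac{1}{2}\sum_{\i\in\nodes}\sum_{\j\in\support{\i}\setminus\{\i\}} \bigl(\artdif_\bij(u)-\artdif_\bij(v)\bigr)(u_\j-u_\i)(z_\i-z_\j),
\end{equation}
so that the whole estimate reduces to controlling $|\artdif_\bij(u)-\artdif_\bij(v)|$, plus a Cauchy--Schwarz in the graph semi-norm $|\cdot|_\ell$ at the end. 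Note that this rewriting mixes the $u$ and $z$ factors, and one still needs $u\in\fespace^{\rm adm}$ (so that the differences $u_\j-u_\i$ can be absorbed into $|u-v|_\ell$ after inserting and subtracting $v_\j-v_\i$; this is the step that actually uses the admissibility hypothesis).

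Next, since $\artdif_\bij(w)=\max\{\detector[\i](w)\K_\bij,0,\detector[\j](w)\K_\bji\}$ and the $\max$ operator is $1$-Lipschitz coordinate-wise, I obtain
\begin{equation}
|\artdif_\bij(u)-\artdif_\bij(v)| \le |\detector[\i](u)-\detector[\i](v)|\,|\K_\bij| + |\detector[\j](u)-\detector[\j](v)|\,|\K_\bji|.
\end{equation}
The three contributions to $\K_\bij$ must then be estimated by standard scaling arguments for tensor-product B-splines on space--time elements of spatial size $h$ and temporal size $\delta t$: the mass/time-derivative term $(\partial_t\shapef[\j],\shapef[\i])$ scales as $h^d$ (the $\delta t^{-1}$ from the time derivative cancels one $\delta t$ from the measure), the convective term $(\conv\cdot\gradient\shapef[\j],\shapef[\i])$ scales as $\|\conv\|_\infty h^{d-1}\delta t$ (the convection field has no time component), and the diffusion term $\diff(\gradient\shapef[\j],\gradient\shapef[\i])$ scales as $\diff h^{d-2}\delta t$. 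This yields the exact prefactor $h^d+\|\conv\|_\infty h^{d-1}\delta t+\diff h^{d-2}\delta t$ appearing in the statement.

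The technical heart of the proof, and the step I expect to be the main obstacle, is the Lipschitz estimate for the shock detector $\detector[\i]$ defined in \eqref{eq.iso-detector}. Because it has the form $\left(N(w)/D(w)\right)^q$ with $N/D\in[0,1]$, and because $D$ can vanish, one cannot differentiate naively. The strategy, as in \cite[Sect.~5]{badia_monotonicity-preserving_2017}, is to write $a^q-b^q=(a-b)\sum_{k=0}^{q-1}a^k b^{q-1-k}$ with $a,b\in[0,1]$, which gives a factor of $q$ and reduces the problem to Lipschitz continuity of the ratio $N/D$. The latter is obtained by expanding $N(u)/D(u)-N(v)/D(v)$ as $(N(u)-N(v))/D(u)+N(v)(D(v)-D(u))/(D(u)D(v))$ and exploiting the pointwise inequality $|N(w)|\le D(w)$ together with the explicit expressions \eqref{jump}--\eqref{mean} to cancel the troublesome $D(u)D(v)$ denominator. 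The result is a bound $|\detector[\i](u)-\detector[\i](v)|\le C\,q\,|u-v|_\ell/\|\cdot\|_{\star}$-type factors that combine cleanly with the $|\K_\bij|$ estimates above. The extension to the present setting requires only that the Greville-abscissae geometry of $\stdomain_\i$ carries over to $(d+1)$ dimensions with tensor-product B-splines, which follows from the analogous construction in Sect.~\ref{sec.discretization}.

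Finally, inserting these estimates into the rewritten $(\B(u)-\B(v),z)$, summing over $\i,\j$, and applying Cauchy--Schwarz in the graph semi-norm $|\cdot|_\ell$ delivers
\begin{equation}
(\B(u)-\B(v),z) \le C\,q\,(h^d+\|\conv\|_\infty h^{d-1}\delta t+\diff h^{d-2}\delta t)\,|u-v|_\ell\,|z|_\ell,
\end{equation}
with a constant $C$ depending only on the polynomial degree $p$ and the shape-regularity of the knot vector, completing the proof.
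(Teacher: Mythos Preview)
Your approach is the same blueprint the paper invokes (it simply cites \cite[Th.~6.1]{badia_monotonicity-preserving_2017} and redoes the $\K_\bij$ bound), and your scaling of the three terms in $\K_\bij$ matches the paper's Cauchy--Schwarz/inverse-inequality computation exactly. However, your first displayed identity is not correct as written: using conservation and symmetry one obtains
\[
(\B(u)-\B(v),z)=\tfrac12\sum_{\i}\sum_{\j\in\support{\i}\setminus\{\i\}}\bigl[\artdif_\bij(u)(u_\i-u_\j)-\artdif_\bij(v)(v_\i-v_\j)\bigr](z_\i-z_\j),
\]
which, after adding and subtracting $\artdif_\bij(v)(u_\i-u_\j)$, splits into \emph{two} terms: the one you wrote, $(\artdif_\bij(u)-\artdif_\bij(v))(u_\i-u_\j)(z_\i-z_\j)$, and a second term $\artdif_\bij(v)\bigl[(u_\i-v_\i)-(u_\j-v_\j)\bigr](z_\i-z_\j)$. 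The second term is the easy one (bounded directly by $|\K_\bij|\,|u-v|_\ell|z|_\ell$), but it is not zero and must appear.

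More importantly, your explanation of how the factor $(u_\i-u_\j)$ is handled in the first term is off. It is not absorbed into $|u-v|_\ell$ by ``inserting and subtracting $v_\j-v_\i$''; rather, the Lipschitz bound for the detector produces a factor of the form $|\detector[\i](u)-\detector[\i](v)|\lesssim q\,|u-v|_{\ell,\mathrm{loc}}/D_\i(u)$, where $D_\i(u)=\sum_{\j}2\mean{|\gradient u\cdot\hat\rr_\bij|}_\bij$ is the denominator of \eqref{eq.iso-detector}. Since each $|u_\i-u_\j|/|\rr_\bij|$ is one summand of $D_\i(u)$, the product $|\detector[\i](u)-\detector[\i](v)|\,|u_\i-u_\j|$ is what collapses to $Cq\,|u-v|_{\ell,\mathrm{loc}}$; the admissibility hypothesis $u\in\fespace^{\rm adm}$ enters to keep the remaining ratios uniformly bounded. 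With these two corrections the argument goes through and coincides with the paper's.
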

\begin{proof}
	The proof is analogous to the one in \cite[Th. 6.1]{badia_monotonicity-preserving_2017}. The only difference arises from the bound for
	\begin{equation}
	\K_\bij = (\partial_t\shapef[\j],\shapef[\i]) + (\conv\cdot\gradient\shapef[\j],\shapef[\i]) + \diff(\gradient\shapef[\j],\gradient\shapef[\i]).
	\end{equation}
	In this case, using Cauchy-Schwarz inequality, the inverse inequality $\|\gradient v_h\|\leq C h^{-1}\|v_h\|$ for $v_h \in\fespace$, and $\|\shapef[i]\|\leq Ch^{d/2}$, we get\correction{
	\begin{align}
	\K_\bij &\leq \|\partial_t\shapef[\j]\|\|\shapef[\i]\| + \|\conv\|_{\infty}\|\gradient\shapef[\j]\|\|\shapef[\i]\| + \diff \|\gradient\shapef[\j]\|\|\gradient\shapef[\i]\| \\
	&\leq C_1 h^{d}
    + \|\conv\|_\infty C_2 h^{d-1}\delta t + \diff C_3 h^{d-2}\delta t ,
	\end{align}}
	where $d$ is the number of spatial dimensions, $h$ is the distance between knots for the spatial directions and $\delta t$ is the distance between knots for the time direction.
\end{proof}
\correction{
Notice that whereas $C$ is uniform with respect to the mesh size, it can depend on the polynomial order of the discretization.}

\section{Time partitioned scheme}\label{sec:time-integration}
Hitherto, we have only considered the solution of the whole space--time problem at once. In order to substantially reduce the computational cost, we propose the division of the time integration in several time subdomains, considering the proposed space--time formulation at every subdomain. Namely, the problem $\eqref{continuous-problem}$ set in $\domain\times(0,T]$, will be decomposed in $\domain\times(t_l,t_{l+1}]$ for $0\leq l \leq n_t-1$, with $t_0=0$ and $t_{n_t}=T$. We define the length of the subdomain as $\Delta t\doteq t_{l+1}-t_l$, and restrict its possible values to $\Delta t = n\, p\, \delta t$ for some $n\in\mathbb{N}$, where $p$ is the order of the spline space, and $\delta t$ is the distance between knots in the temporal direction. Notice that we are only using discretizations formed from the tensor product of discretizations in 1D. Therefore, with the particular choice of $\Delta t$, $t_l$ will always be the temporal coordinate of a layer of knots. Hence, performing this kind of partitions is straightforward. Other partitions might be considered, however we choose the previous one because it is particularly simple to use it in our implementation.

The approximation space of splines for every subdomain is obtained as follows. Given the complete domain $\domain\times(0,T]$, we discretize it as described in Sect. \ref{sec.discretization}, resulting in a spline space $\fespace$. Then in order to reduce the coupling between partitions, we insert $p$ knots at $t_l$. The resulting spaces at each subdomain, say $\fespace^l$, are fully decoupled. However, due to causality in time there exists a sequential coupling between subdomains, i.e. the information travels in the positive direction. In other words, the solution at subdomain $l$ will affect the solution at $l+1$, but not the opposite. Therefore, we impose that the initial conditions at subdomain $l+1$ are equal to the solution at the final time of subdomain $l$, i.e. $u_h^{l+1}(t_l)\doteq u_h^{l}(t_l)$. After imposing this restriction, the complete approximation space, $\tilde{\fespace}$, is $C^0$, and coupled sequentially. Hence, each subdomain can be solved sequentially, and thus the computational cost is significantly reduced. 

The partitioned space--time scheme with nonlinear stabilization reads as follows.
For $l=1,...,n_t$; find $\unk^l\in\fespace^l$ such that $\unk^l=\uboundary_h$ on $\partial\domain$, $\unk^l(\x,t_l)=\unk^{l-1}(\x,t_l)$ with $\unk^0(\x,t_0) = u_{0h}$, and
\begin{equation}\label{time-discrete-problem}
(\partial_t\unk^l,\test) + (\conv\cdot\gradient\unk^l,\test) + \diff(\gradient\unk^l,\gradient\test) + B_h(\unk^l;\unk^l,\test) = (\force,\test),\qquad \forall \test\in\fespace^l,
\end{equation}

Due to the partition $\unk$ will only be piecewise continuous in time. Let us prove now that the scheme still satisfies the global \ac{DMP}.

\begin{lemma}
	The solution of problem \eqref{time-discrete-problem}, using the shock detector defined in \eqref{eq.iso-detector}, satisfies the global \ac{DMP} (Def. \ref{def.global-dmp}) if $g=0$ in $\domain\times(0,T]$ and, for every control point $\i\in\nodes$ such that $u_\i$ is a local discrete extremum, conditions \eqref{dmp-cond} hold.
\end{lemma}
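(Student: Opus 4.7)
The plan is to reduce the statement to a sequential application of Theorem \ref{thm.genDMP}, subdomain by subdomain, and then stitch the bounds together by induction on the index $l$ of the time slab. The key observation is that problem \eqref{time-discrete-problem} restricted to $\Omega \times (t_l, t_{l+1}]$ is \emph{exactly} the stabilized space--time problem \eqref{eq.stabilized-problem}, only posed on a shorter temporal window, with $\unk^{l-1}(\cdot,t_l)$ playing the role of the initial datum and $\uboundary_h$ playing the role of the spatial boundary datum. Since the stabilization operator $\B_\bij$, the shock detector \eqref{eq.iso-detector} and the required structural properties (compact support, symmetry, conservation) are defined locally in the space--time sense and do not see the partition, the hypotheses of Theorem \ref{thm.genDMP} transfer verbatim to each subdomain.

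First I would formalize the base case $l=1$: by Theorem \ref{thm.genDMP} applied on $\Omega \times (t_0, t_1]$, every control value $u_\i$ associated to a Greville abscissa in this slab is bounded by
\[
\min\!\left(\min_{\x\in\partial\Omega,\, t\in[t_0,t_1)} \uboundary_h,\ \min_{\x\in\Omega} u_{0h}\right) \leq u_\i \leq \max\!\left(\max_{\x\in\partial\Omega,\, t\in[t_0,t_1)} \uboundary_h,\ \max_{\x\in\Omega} u_{0h}\right),
\]
because at $t=t_0$ the discrete data is precisely $u_{0h}$.

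Next I would carry out the inductive step. Assuming the global bound holds in all slabs up to index $l$, apply Theorem \ref{thm.genDMP} on $\Omega \times (t_l, t_{l+1}]$. The theorem yields that every $u_\i$ in this slab is bounded by the extrema of its own boundary/initial data, namely $\uboundary_h$ on $\partial\Omega \times [t_l,t_{l+1})$ and $\unk^l(\cdot, t_l)$ on $\Omega \times \{t_l\}$. The transmission condition $\unk^{l+1}(\x,t_l) \doteq \unk^l(\x,t_l)$ identifies the initial datum of the new slab with the final trace of the previous one, whose control values satisfy the induction hypothesis. Chaining the two bounds gives the global DMP for slab $l+1$, and taking the maximum over $l$ at the end closes the induction.

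The only point that requires a bit of care, and the place I would expect to spend the most attention, is the verification that the ``initial datum'' role of $\unk^l(\cdot,t_l)$ is consistent with the hypotheses of Theorem \ref{thm.genDMP}. Concretely, one has to observe that the $p$ repeated knots inserted at $t_l$ make the space $\tilde{\fespace}$ only $C^0$ across slab interfaces, so that the $B$-splines of $\fespace^{l+1}$ whose Greville abscissae sit on the hyperplane $t=t_l$ are interpolatory in time there, and hence their coefficients are literally the prescribed values $\unk^l(\x,t_l)$. This is precisely the same situation as the initial condition $u_{0h}$ in Theorem \ref{thm.genDMP}, so the convex-combination argument in its proof applies without modification, and the bound propagates from slab to slab, yielding the global DMP in Def. \ref{def.global-dmp}.
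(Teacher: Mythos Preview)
Your proposal is correct and follows essentially the same approach as the paper: apply Theorem~\ref{thm.genDMP} on the first slab, then proceed by induction, using the transmission condition $\unk^{l+1}(\cdot,t_l)=\unk^{l}(\cdot,t_l)$ to identify the initial datum of each new slab with the (already bounded) final trace of the previous one. Your treatment is in fact more explicit than the paper's, particularly the remark that the $p$ repeated knots at $t_l$ make the basis interpolatory there so that the role of $u_{0h}$ in Theorem~\ref{thm.genDMP} is legitimately played by $\unk^{l}(\cdot,t_l)$.
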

\begin{proof}
	From Th. \ref{thm.genDMP} it is easy to see that conditions \eqref{dmp-cond} hold for the first subdomain. Hence, the solution at the first subdomain at time $t_1$, $\unk^1(\x,t_1)$, is bounded by the initial and boundary conditions. Since $\unk^1(\x,t_1)$ are the initial conditions for the second subdomain, then again from Th. \ref{thm.genDMP} it is known that the solution in the second subdomain is bounded by $\unk^1(\x,t_1)$, and thus by the initial and boundary conditions. Therefore, by induction, we conclude that the global \ac{DMP} is satisfied in the whole domain.
\end{proof}

\section{Differentiable stabilization}\label{sec:differentiable-stabilization}

In this section, we introduce a different version of the previous operators.
As exposed in \cite{badia_differentiable_2017,badia_monotonicity-preserving_2017}, the regularization of all non-differentiable operators in the stabilization term improves the nonlinear convergence, and allows us to use Newton's method. We use the same strategy introduced in \cite{badia_monotonicity-preserving_2017}. Then, the shock detector reads

\begin{equation}\label{eq.smthdetector}
\smthdetector[i](\unk)\doteq
\left[\smthlimit{
	\dfrac{\absn{\sum_{\j\in\neighborhood[\i]} \jump{\gradient\unk}_\bij}+\gamma_h}{\sum_{\j\in\neighborhood[\i]}2\mean{\absd{\gradient\unk\cdot\hat{\rr}_\bij}}_\bij+\gamma_h}}\right]^q,
\end{equation}
where $\gamma_h>0$ is a parameter to prevent division by zero, and the regularized absolute values by 
\begin{equation}
\absn{x} =  \sqrt{x^2 + \varepsilon_h},\qquad
\absd{x} =  \frac{x^2}{\sqrt{x^2 + \varepsilon_h}}.
\end{equation}
Notice that $\absd{x}\leq |x|\leq\absn{x}$. With this regularization, the quotient in the shock detector might become greater than one, thus we need to smoothly limit its value to one. To this end we recall $\smthlimit{x}$, which reads
\begin{equation}
\smthlimit{x} \doteq \left\{\begin{array}{ll}
2x^4-5x^3+3x^2+x & x<1\\
1 & x\geq 1
\end{array}\right. ,
\end{equation}
and clearly is twice differentiable and bounded above by 1.
The differentiable version still satisfies the requirements for a shock detector, i.e., it is a real value in $[0,1]$ and it is equal to 1 if $u_\i$ is a local extrema. This result follows directly from \cite[Lm 7.1]{badia_monotonicity-preserving_2017}.

Furthermore, for the definition of the artificial diffusion we need to regularize the maximum function. We choose again the same strategy as in \cite{badia_monotonicity-preserving_2017}, and define $\smax\{\cdot,\cdot\}$ as
\begin{equation}\label{eq:smax}
\smax\{x,y\} \doteq \frac{ \absn[\sigma_h]{x -y}}{2} +\frac{x +y}{2}.
\end{equation}
Finally, we can define the twice differentiable artificial diffusion parameter as
\begin{equation}\label{smth-artificial-diffusion}
\smthartdif_\bij(w_h)\doteq\left\{
\eqarrayconf
\begin{array}{l}
\smax\left\{\smax\left\{\smthdetector[\i](w_h)\K_\bij,\smthdetector[\j](w_h)\K_\bji\right\},0\right\} \quad \text{for}\quad \j\neq \i \\
\displaystyle\sum_{\j\in\support{\i}\backslash\{\i\}} \smthartdif_\bij(w_h)
\end{array}\right. ,
\end{equation}
and the stabilization operator reads
\begin{equation}
\tilde{B}_h(w_h;\unk,\test)\doteq \sum_{\i\in\nodes}\sum_{\j\in\support{\i}} \smthartdif_\ij(w_h) v_\i u_\j \graphl(\i,\j).
\end{equation}

In order to obtain a differentiable operator, we have added a set of regularizations that rely on different parameters, e.g., $\sigma_h,\ \varepsilon_h,\ \gamma_h$. Giving a proper scaling of these parameters is essential to recover theoretic convergence rates. In particular, we use the following relations
\begin{equation}
\sigma_h=\sigma|\bsbeta|^2  L^{2(d-3)} h^{2(p+1)},\quad \varepsilon_h=\varepsilon L^{-4} h^2,\quad \gamma_h=L^{-1}\gamma,
\end{equation}
where $d$ is the spatial dimension of the problem, $L$ is a characteristic length, and $\sigma,\ \varepsilon,$ and $\gamma$ are of the order of the unknown.

\section{Numerical experiments}\label{sec.experiments}

In this section we present some numerical experiments showing the behavior of the scheme previously introduced. First, a convergence analysis is performed in order to assess the correctness of the proposed scheme and its implementation. Then, we assess the performance of the proposed stabilization method for high-order discretizations, including a brief analysis of the effect of the regularization.

\subsection{1D Transient Diffusion}\label{ssec.test1} 
The purpose of this test is assessing the partitioned time integration scheme in Sect. \ref{sec:time-integration}. To this end, we solve the following problem for $t\in(0,1]$ and $x\in\domain\doteq(0,1)$,
\begin{equation}\label{eq.transient-diffusion}
\left\{\begin{array}{rl}
\partial_t u + \partial_{xx}u = f & \text{in } \domain\times(0,1] \\
u = 0 & \text{at } \partial\domain
\end{array}\right.,
\end{equation}
where $f\doteq 2(6x^2 - 6x + 1)(t(t-1))^2 + 2t(t-1)(2t-1)(x(x-1))^2$. This problem has $u=(x(x-1))^2 \ (t(t-1))^2$ as exact solution. We perform a convergence analysis where the mesh is successively refined in the time direction for first, second, and third order discretizations. In particular, the distance between knots in the temporal direction is refined as ${\delta t=\{0.2, 0.1,0.05,0.025,0.0125\}}$ for the first order discretization. In spatial directions, the distance is small enough $(h = 1/400)$ to prevent that spatial discretization errors affect the analysis. Second and third order discretizations are obtained using the following $k$-refinement (see \cite{Cottrell2009} for more details). We refine the discretization such that the number of control points increase at the same rate as a Lagrangian \ac{fe} discretization does when its order is increased. Fig.~\ref{fig.k-rfinement} shows the result of $k$-refinements to $p=2$ and $p=3$ discretizations, for an interior subset of the discretization. Henceforth, we will use this kind of $k$-refinement in order to increase the discretization order.

\begin{figure}[h]
	\centering
	\includegraphics[width=0.85\textwidth]{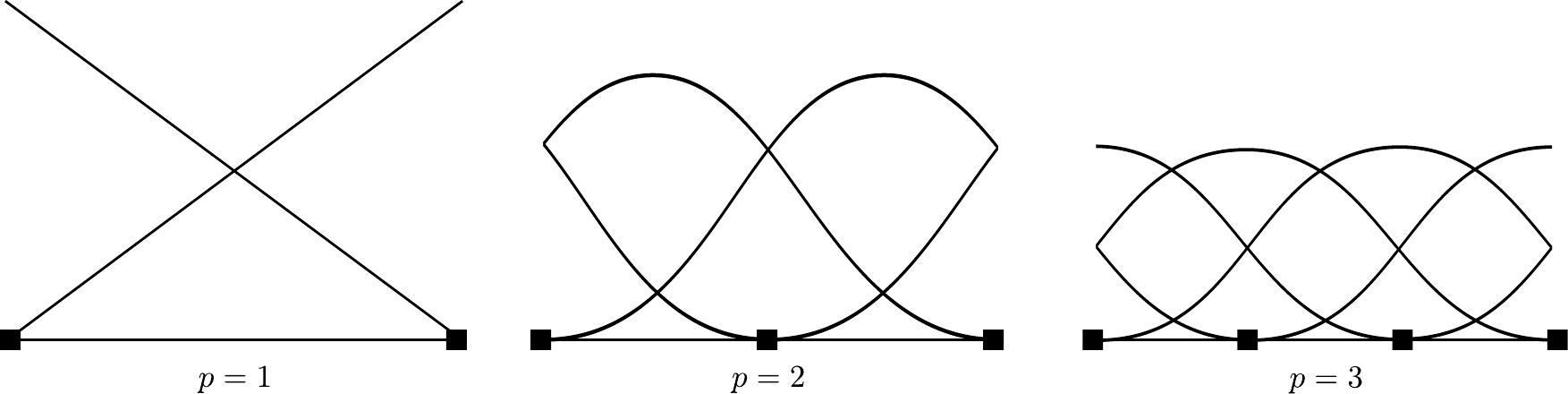}
	\caption{Second and third order discretizations obtained from the $k$-refinement of an initial first order discretization. Notice that shape functions are depicted for interior knots, at boundary knots shape functions become interpolatory, see Fig. \ref{fig:bspline}.}
	\label{fig.k-rfinement}
\end{figure}

\begin{figure}[h]
	\centering
	\begin{subfigure}[b]{0.48\textwidth}
		\includegraphics[width=\textwidth]{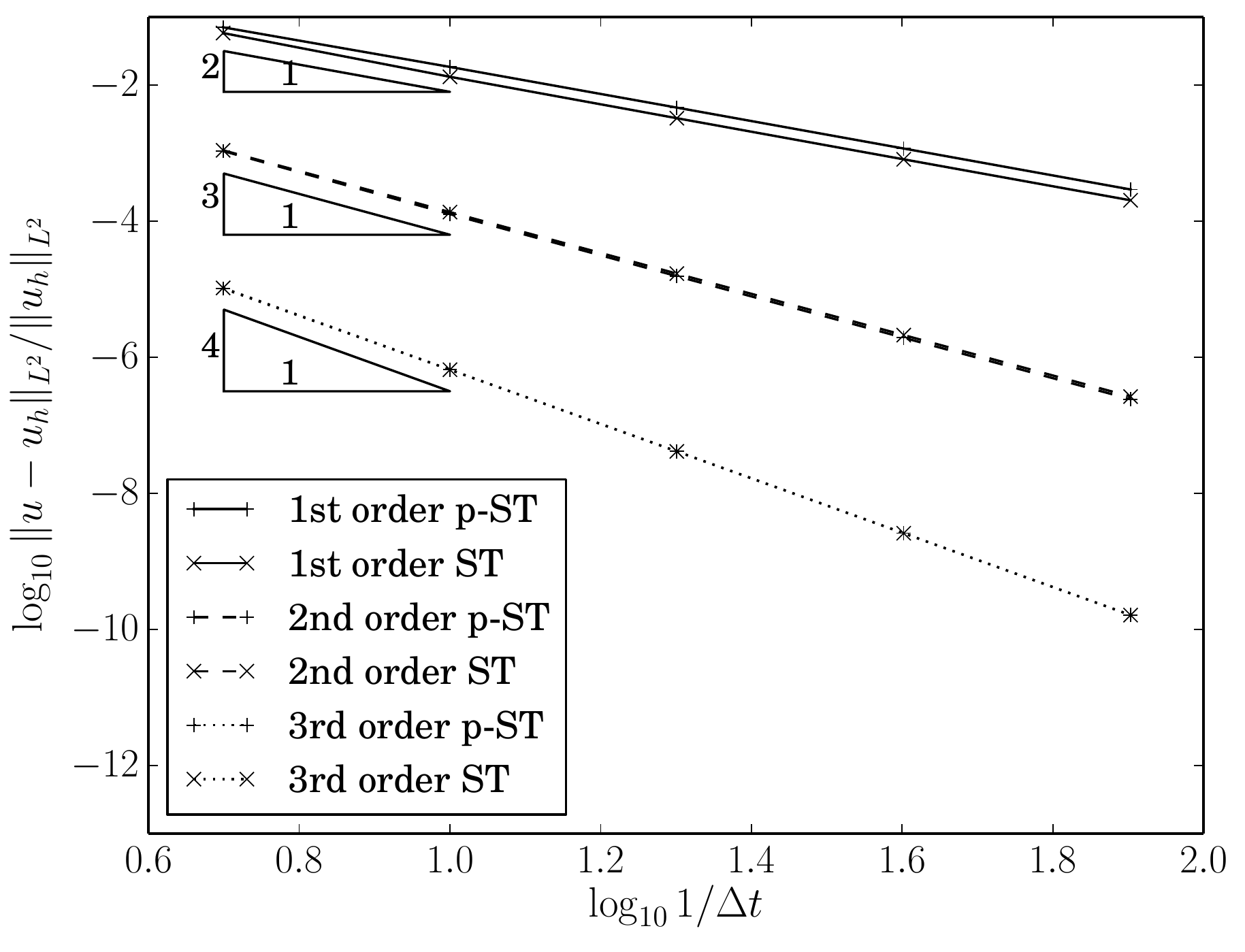}
		\caption{Relative $\ltwo$ norm of the error.}
	\end{subfigure}
	\hfill
	\begin{subfigure}[b]{0.48\textwidth}
		\includegraphics[width=\textwidth]{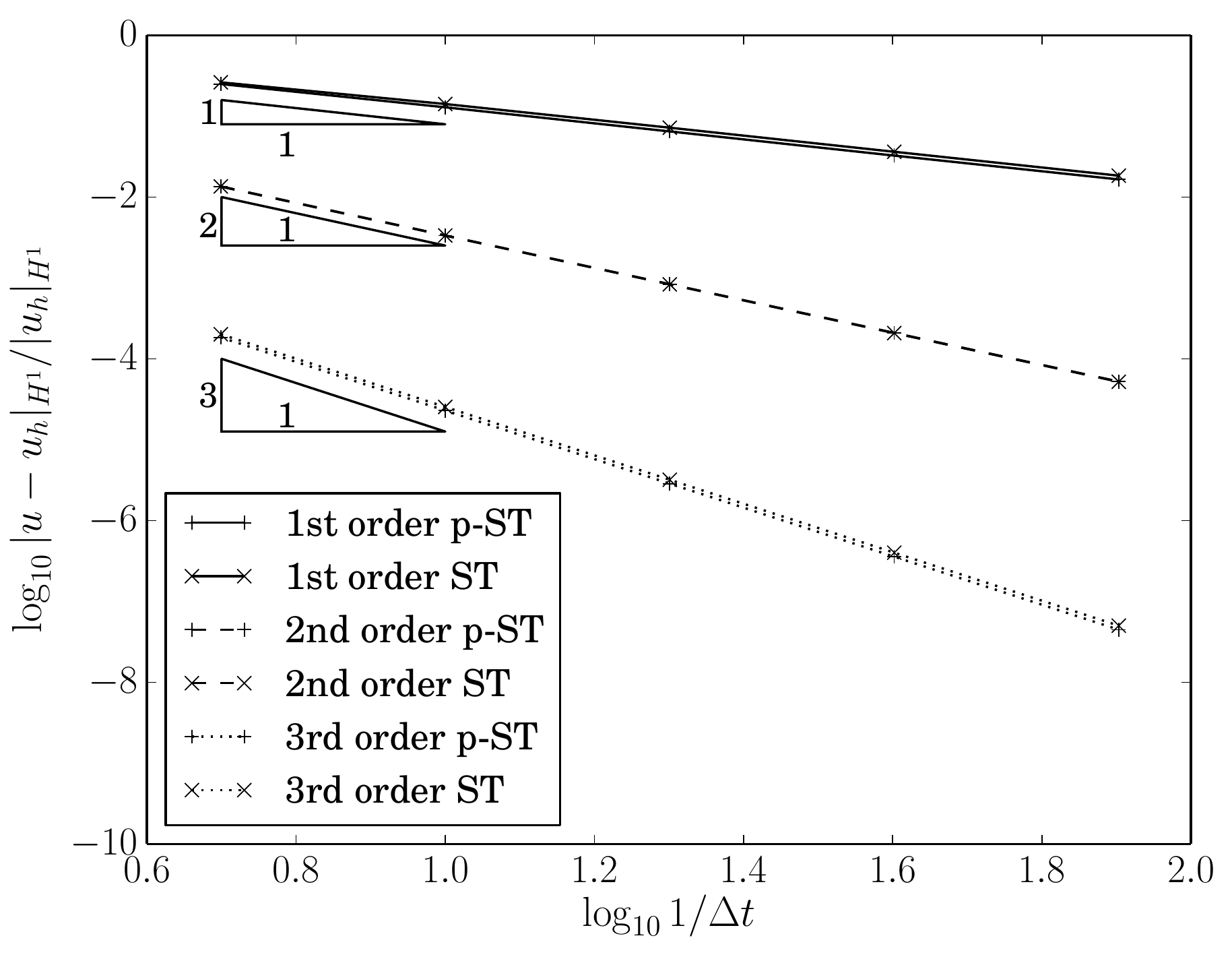}
		\caption{Relative $\hone$ norm of the error.}
	\end{subfigure}
	\caption{Convergence in time results for problem \eqref{eq.transient-diffusion}, using standard and partitioned space--time schemes.}
	\label{fig.convergence-rate}
\end{figure}

We measure the relative $\ltwo$ norm and $\hone$ semi-norm of error \correction{in the whole space--time domain}, and compute the resulting convergence rate. 
Errors in $\ltwo$ norm and $\hone$ semi-norm are depicted in Fig. \ref{fig.convergence-rate} (a) and (b), respectively.
In Table \ref{tab.1d-conv-test} the measured convergence rates are shown for the original non-partitioned scheme and the proposed in Sect. \ref{sec:time-integration}. We observe a slight increase in the error for the partitioned scheme. However, the obtained results show optimal convergence rates for both schemes introduced above.

\begin{table}[h]
\centering
\caption{Measured convergence rates in $\ltwo$ norm and $\hone$ semi-norm, for problem \eqref{eq.transient-diffusion}.}
\label{tab.1d-conv-test}
\begin{tabular}{cccc} \hline
	Order & Method & $\ltwo$ convergence & $\hone$ convergence \\ \hline
	1   & p-ST & -1.98  & -0.98 \\
	1   & ST   & -2.04  & -0.96 \\
	2   & p-ST & -3.03  & -2.00 \\
	2   & ST   & -3.00  & -2.01 \\
	3   & p-ST & -3.99  & -3.00 \\
	3   & ST   & -3.99  & -2.99 \\ \hline
\end{tabular}\\
{\small p-ST: Partitioned space--time, ST: space--time.}
\end{table}

\subsection{Steady convection} 

In this experiment we assess the convergence of the stabilized schemes introduced in Sect. \ref{sec:differentiable-stabilization}. We use a steady pure convection problem with a non-monotonic smooth solution. In particular, we solve the following problem for $x\in\domain\doteq[0,1]^2$, 
\begin{equation}\label{eq.nonmonotonic}
\left\{\begin{array}{rl}
\conv\cdot\gradient u= 0 & \text{in } \domain \\
u = u_D & \text{at } \inflowboundary
\end{array}\right.,
\end{equation}
where $u_D = \sin\left(2 \pi \left(x-\frac{y}{\tan \theta}\right)\right)$, $\conv = (\cos \theta,\sin\theta)$, and $\theta = \pi/3$. The analytical solution of the above problem reads 
$u = \sin \left(2\pi \left(x-\frac{y}{\tan \pi/3}\right)\right)$. The convergence analysis is performed for first, second, and third order discretizations\correction{, i.e. $p=\{1,2,3\}$}. We use a standard nonlinear solver (see \cite{badia_differentiable_2017} for details), with a nonlinear tolerance $\frac{u^{k+1}-u^k}{u^k}<10^{-6}$. The following stabilization parameters have been used: $q=10$, $\varepsilon=10^{-5}$, $\sigma=10^{-6}$, $\gamma=10^{-10}$. The selection of these parameters is based on the outcome of previous works \cite{badia_differentiable_2017,badia_monotonicity-preserving_2017} and Sect. \ref{sec.params}.

Convergence plots are shown in Fig. \ref{fig.space-convergence-rate} and the corresponding convergence rates in Table \ref{tab.space-conv-test}. As expected, it is observed that the scheme recovers second order convergence in the $\ltwo$ error norm and first in the $\hone$ error semi-norm. 
It is known that the stabilized scheme should recover second order convergence for $p=1$. However, due to peak clipping errors, higher convergence rates are not expected even if a higher order discretization is used \cite{Kuzmin2017}.
In any case, we do observe that the error diminishes as the discretization order is increased using the $k$-refinement previously defined. 

\begin{figure}[h]
	\centering
	\begin{subfigure}[b]{0.48\textwidth}
		\includegraphics[width=\textwidth]{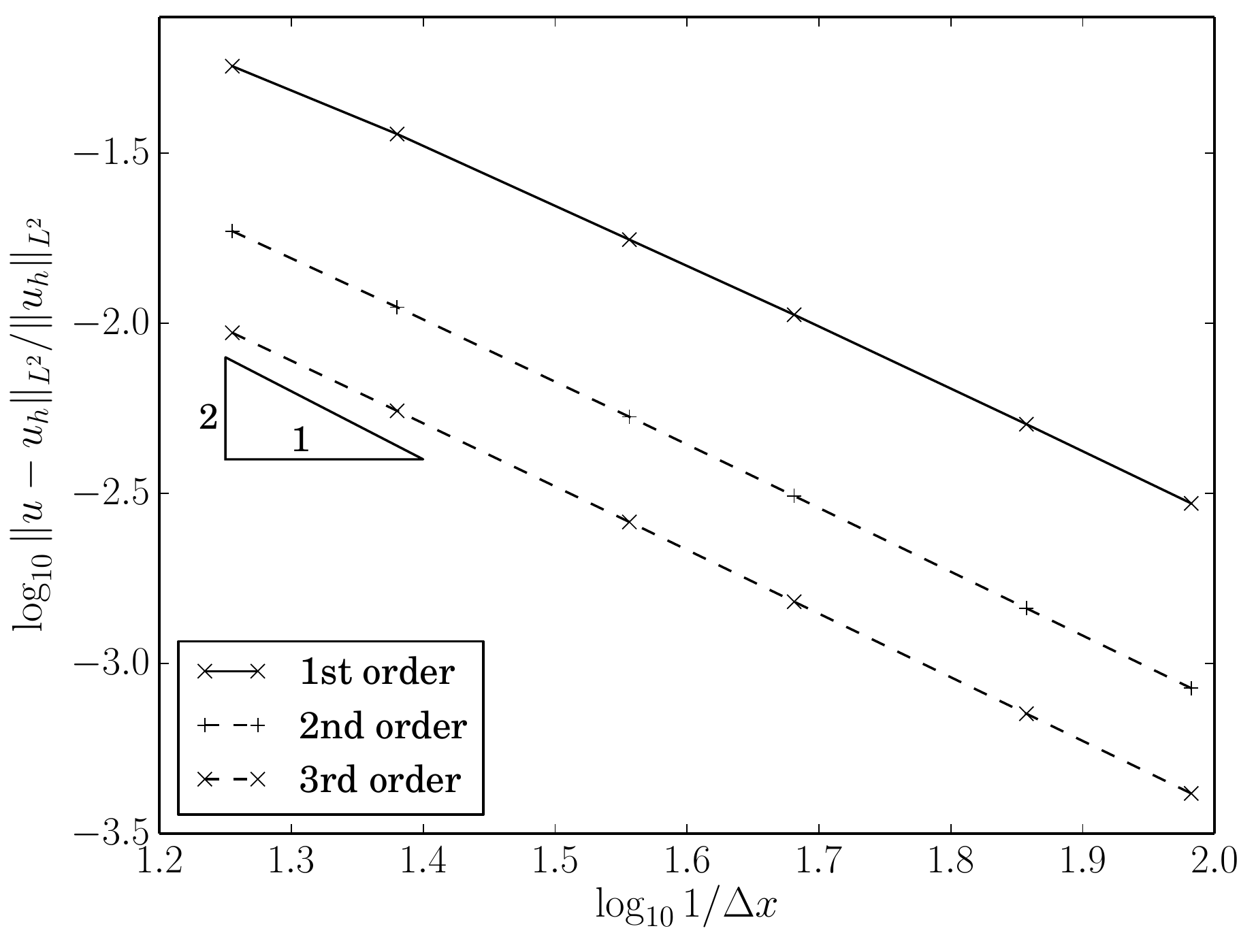}
		\caption{Relative $\ltwo$ norm of the error.}
	\end{subfigure}
	\hfill
	\begin{subfigure}[b]{0.48\textwidth}
		\includegraphics[width=\textwidth]{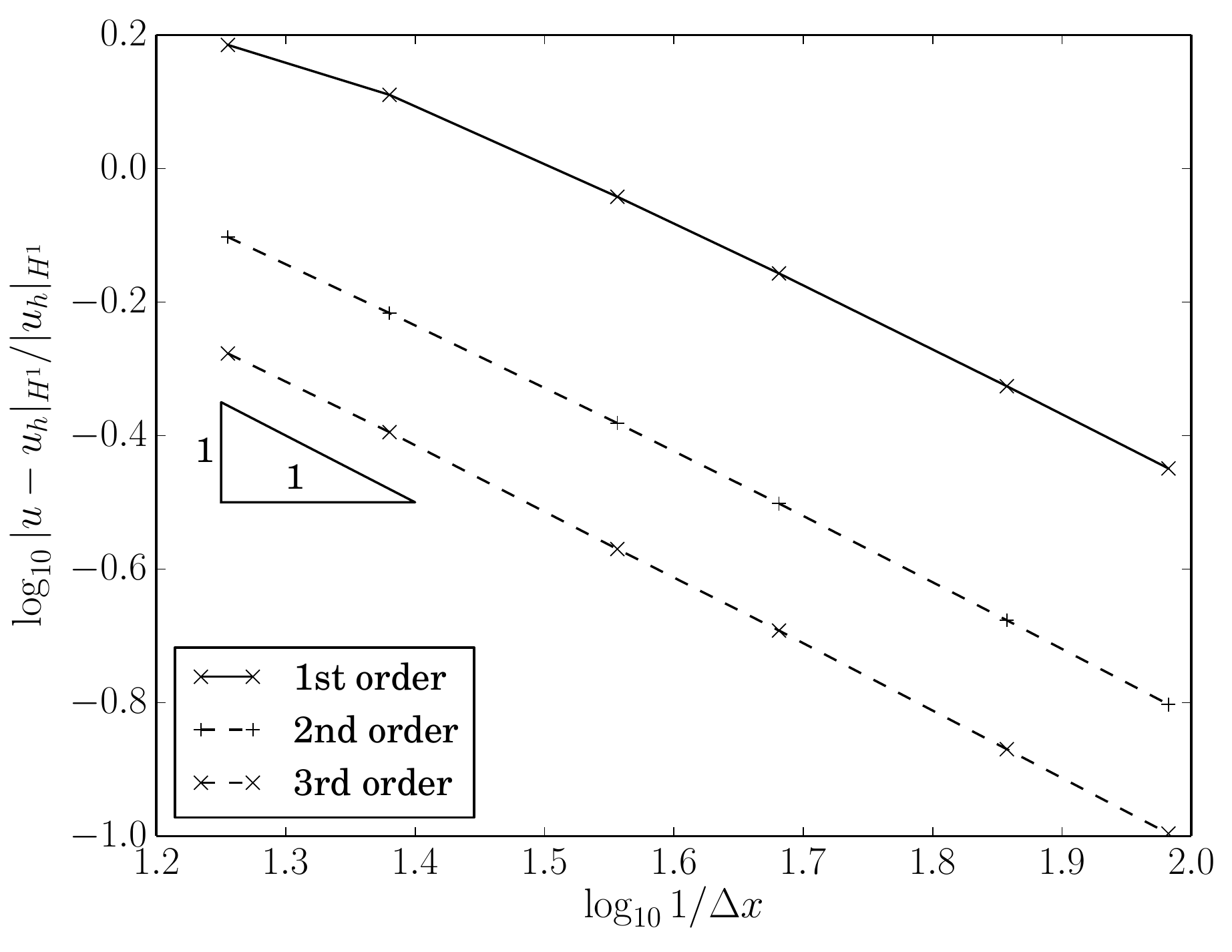}
		\caption{Relative $\hone$ semi-norm of the error.}
	\end{subfigure}
	\caption{Convergence in space results for problem \eqref{eq.nonmonotonic}.}
	\label{fig.space-convergence-rate}
\end{figure}

\begin{table}[h]
	\centering
	\caption{Measured convergence rates in $\ltwo$ and $\hone$ norms, for problem \eqref{eq.nonmonotonic}.}
	\label{tab.space-conv-test}
	\begin{tabular}{cccc} \hline
		Order & $\ltwo$ convergence & $\hone$ convergence \\ \hline
		1    & 1.77  & 0.88  \\
		2    & 1.85  & 0.96  \\
		3    & 1.86  & 0.99  \\ \hline
	\end{tabular}
\end{table}

\subsection{Nonlinear convergence}\label{sec.params}
In the current test, we aim to briefly analyze the effect of the stabilization parameters on the nonlinear convergence of the method. To this end, we solve the following 1D pure convection problem with discontinuous initial conditions.
\begin{equation}\label{eq.sharplayer}
\left\{\begin{array}{rl}
\partial_t u + \conv\cdot\gradient u= 0 & \text{in } \domain\times[0,T) \\
u = u_0 & \text{at } t=0 \\
u = u_D & \text{at } \partial\domain
\end{array}\right.,
\end{equation}
where $\conv\doteq 1$, $\domain\doteq(0,1]$, $T=0.5$, and $u_0\doteq 1 - H_0(x-0.25)$, where $H_0$ is the well-known zero-centered Heaviside function. First and second discretization orders are used in a coarse mesh of $25\times 25$ control points. To obtain the second order mesh, we perform the $k$-refinement as in the previous experiment.

We refer the reader to \cite{badia_differentiable_2017,badia_monotonicity-preserving_2017} for a deeper analysis on the effect of each regularization parameter. Therein, the same family of shock detectors is used in the context of first order cG and dG Lagrangian FEs. In the present study, we analyze the effect of the regularization globally using a fixed relation between the different parameters. In particular, we use the following parameters: $\gamma=10^{-10}$, $\sigma=\zeta$, $\varepsilon=\zeta^2$, where $\zeta=\{10^{-1}, 10^{-2}, 10^{-3}, 10^{-4}\}$. Furthermore, the effect is also compared as $q$ is incremented, particularly for $q=\{1,2,5,10\} $. In addition, the non-regularized version is also used to show the improvement in the nonlinear convergence.
The relaxed Picard and hybrid nonlinear solvers presented in \cite{badia_differentiable_2017} are used, \correction{and the nonlinear tolerance is set to $10^{-5}$.}

Fig. \ref{fig.nl-conv-p1} and \ref{fig.nl-conv-p2} show the results for first and second order discretizations, respectively. In general terms, as $q$ is increased or $\zeta$ is decreased, sharper solutions are observed. However, nonlinear iterations increase. As expected, the hybrid method outperforms the relaxed Picard method. Even though it requires more nonlinear iterations, the non-regularized detector might be a simpler (parameter-free) alternative to the regularized one. Finally, it is worth mentioning that a slight increase in the required number of iterations is observed as the discretization order is increased. However, the obtained results are more accurate, i.e., the discontinuity becomes sharper.

\begin{figure}[h]
	\centering
	\includegraphics[width=\textwidth]{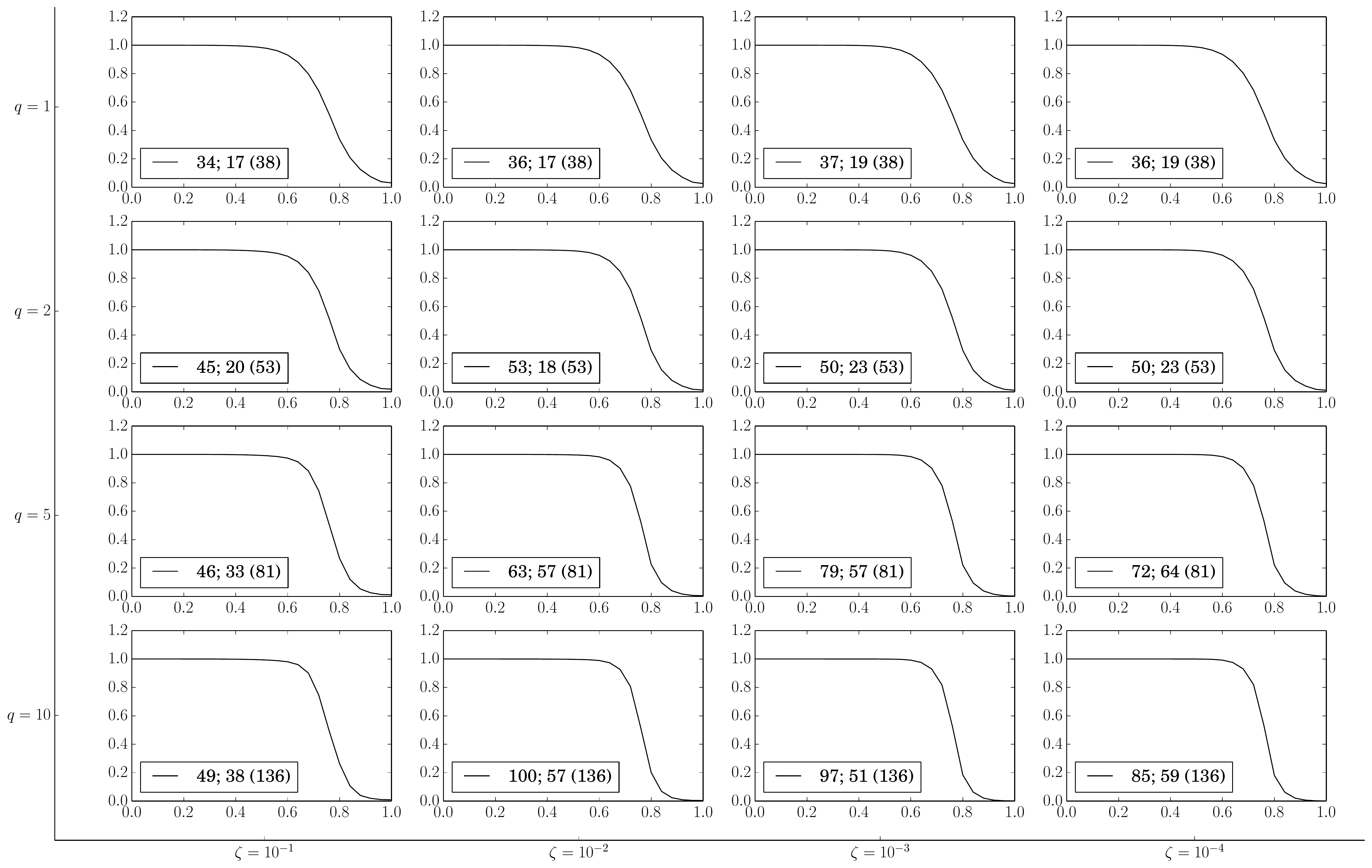}
	\caption{Effect of the regularization parameters for first order discretizations. The numbers in legends are the number of nonlinear iterations performed. First number is for relaxed Picard and the next for hybrid scheme, both for the regularized stabilization. The number in brackets is the number of iterations required to converge the non-differentiable method using relaxed Picard scheme.}
	\label{fig.nl-conv-p1}
\end{figure}

\begin{figure}[h]
	\centering
	\includegraphics[width=\textwidth]{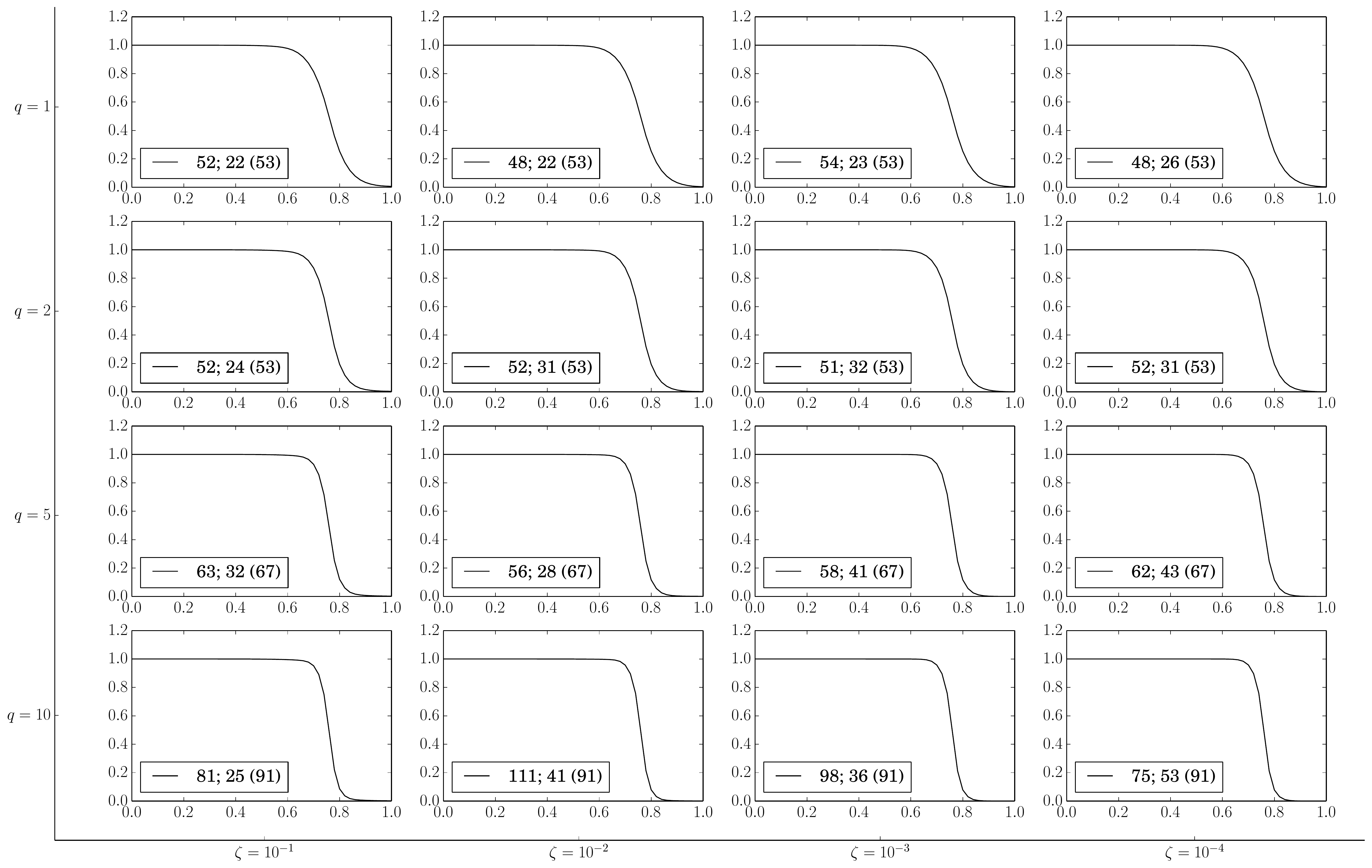}
	\caption{Effect of the regularization parameters for second order discretization. The numbers in legends are the number of nonlinear iterations performed. First number is for relaxed Picard and the next for hybrid scheme, both for the regularized stabilization. The number in brackets is the number of iterations required to converge the non-differentiable method using relaxed Picard scheme.}
	\label{fig.nl-conv-p2}
\end{figure}

\subsection{1D Sharp layer propagation} 
The performance of the stabilization schemes is analyzed as the discretization order is increased. To this end, we use again the previous problem \eqref{eq.sharplayer}. The regularization parameters are kept fixed, while the discretization is modified both in terms of the order of accuracy and the number of control points.

We use a nonlinear tolerance of $10^{-5}$. The regularization parameters used are $q=10$, $\varepsilon = 10^{-8}$, $\sigma=10^{-6}$, and $\gamma=10^{-10}$. With this setting, we solve the above problem using a discretization that keeps the number of control points fixed as the order is increased, and another one using the $k$-refinement defined in the previous experiment. For the former, we use a discretization of 120 by 60 control points. For the latter, we start with a first order discretization of 120 by 60 control points and refine as previously explained.

In Fig. \ref{sfig.1DconvIsotropicA} the solution at $t=0.5$ is shown for different orders and fixed number of control points, and using the $k$-refinement in Fig. \ref{sfig.1DconvIsotropicB}. We observe that for non-smooth solutions, fixing the number of control points and increasing the order does not improve the results. \correction{This is a consequence of the underlying discretization properties. The support of the shape functions becomes larger as the order is increased. Therefore, nonsmooth solutions become slightly more smeared}. In the case of Fig. \ref{sfig.1DconvIsotropicB}, as expected, we observe better approximations as the order is increased using the $k$-refinement. Hence, better results might be expected as the order is increased for problems that combine discontinuities and smooth profiles.

In Fig. \ref{fig.1DconvPartitioned}, similar results are shown when using the time integration scheme proposed in Sect. \ref{sec:time-integration}. A small degradation of the results can be seen in Fig. \ref{sfig.1DconvPartitionedA} as we increase the discretization order. In a similar trend, we observe less improvement in Fig. \ref{sfig.1DconvPartitionedB} than in Fig. \ref{sfig.1DconvIsotropicB}. We attribute this degradation to the time partitions, which becomes more evident as the subdomains are smaller. \correction{In particular, at the boundary of each partition the method might slightly increase the amount of diffusion introduced. At these boundaries, the shock detector relies on a smaller domain to determine if the \ac{DMP} is satisfied. Therefore, it is more likely to introduce more diffusion. On the other hand, the partition itself modifies the scheme introducing some error as shown in Sect.\ \ref{ssec.test1}.}

\begin{figure}[h]
	\centering
	\begin{subfigure}[b]{0.48\textwidth}
		\includegraphics[width=\textwidth]{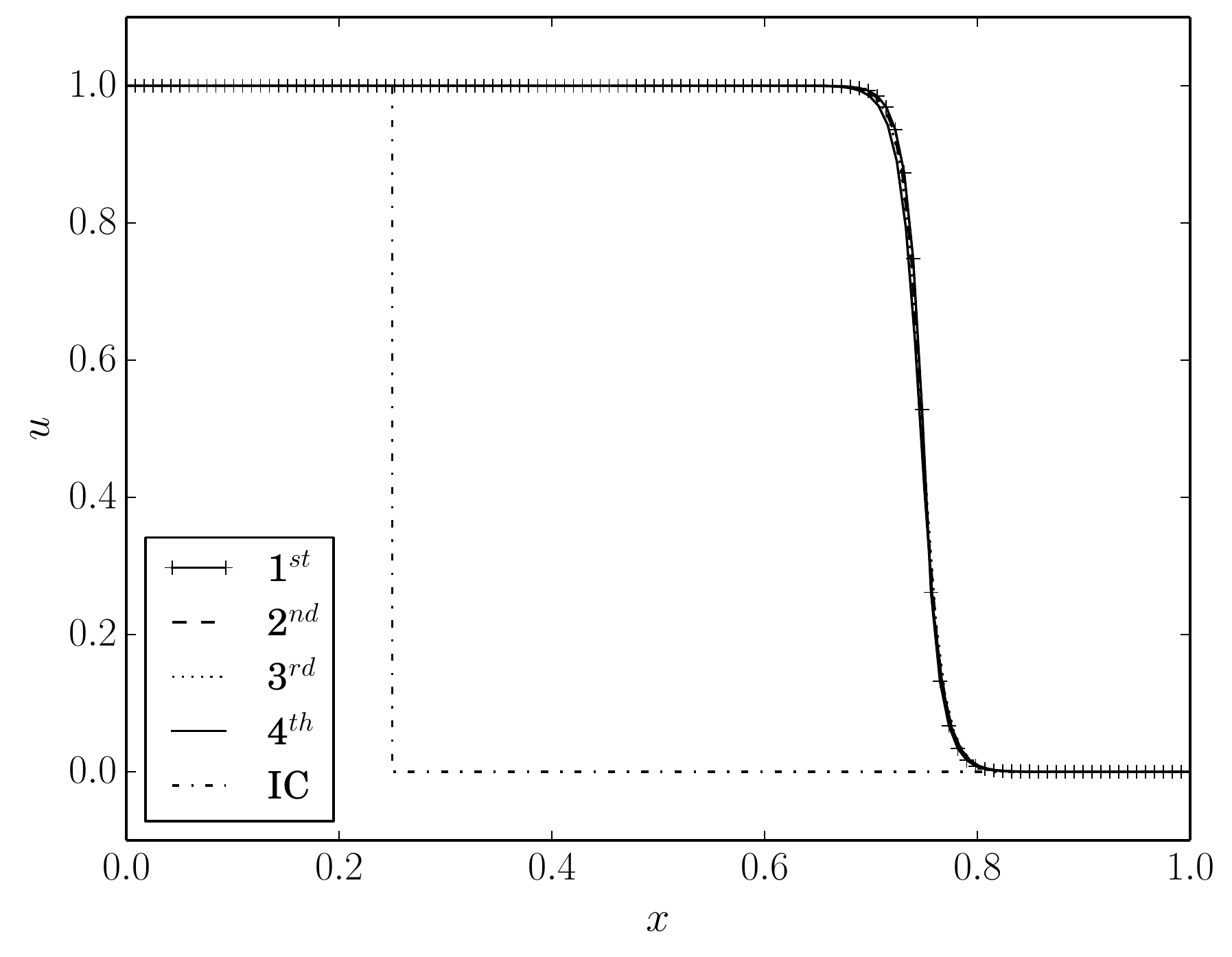}
		\caption{Solutions increasing the order while keeping fixed the number of control points.}
		\label{sfig.1DconvIsotropicA}
	\end{subfigure}
	\hfill
	\begin{subfigure}[b]{0.48\textwidth}
		\includegraphics[width=\textwidth]{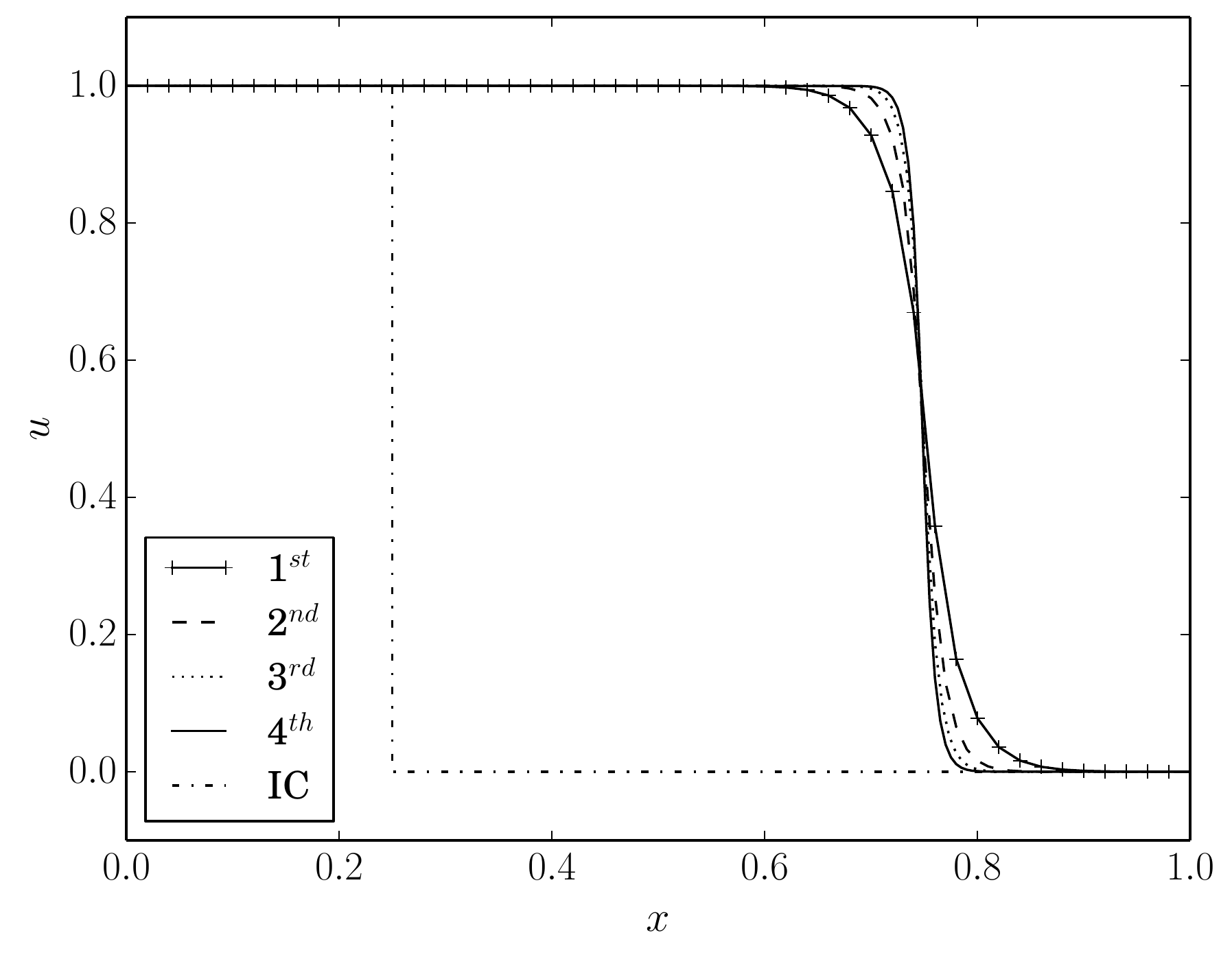}
		\caption{Solution increasing the order using the $k$-refinement process described above.}
		\label{sfig.1DconvIsotropicB}
	\end{subfigure}
\caption{Solution of problem \eqref{eq.sharplayer} at $t=0.5$ for first to fourth order discretizations.}
	\label{fig.1DconvIsotropic}
\end{figure}

\begin{figure}[h]
	\centering
	\begin{subfigure}[b]{0.48\textwidth}
		\includegraphics[width=\textwidth]{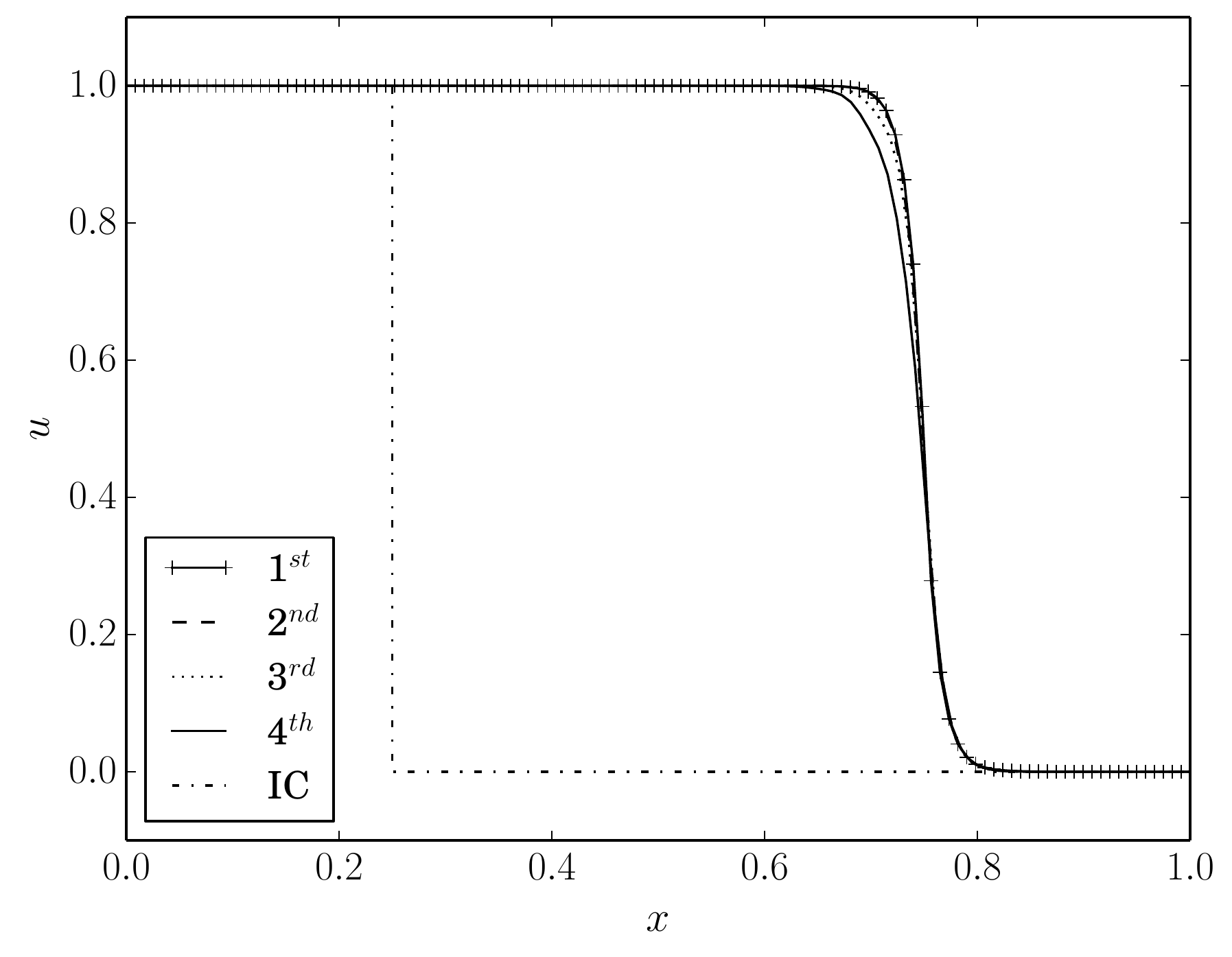}
		\caption{Solution using fixed number of control points, and time integration defined in Sect. \ref{sec:time-integration} with 5 partitions.}
		\label{sfig.1DconvPartitionedA}
	\end{subfigure}
	\hfill
	\begin{subfigure}[b]{0.48\textwidth}
		\includegraphics[width=\textwidth]{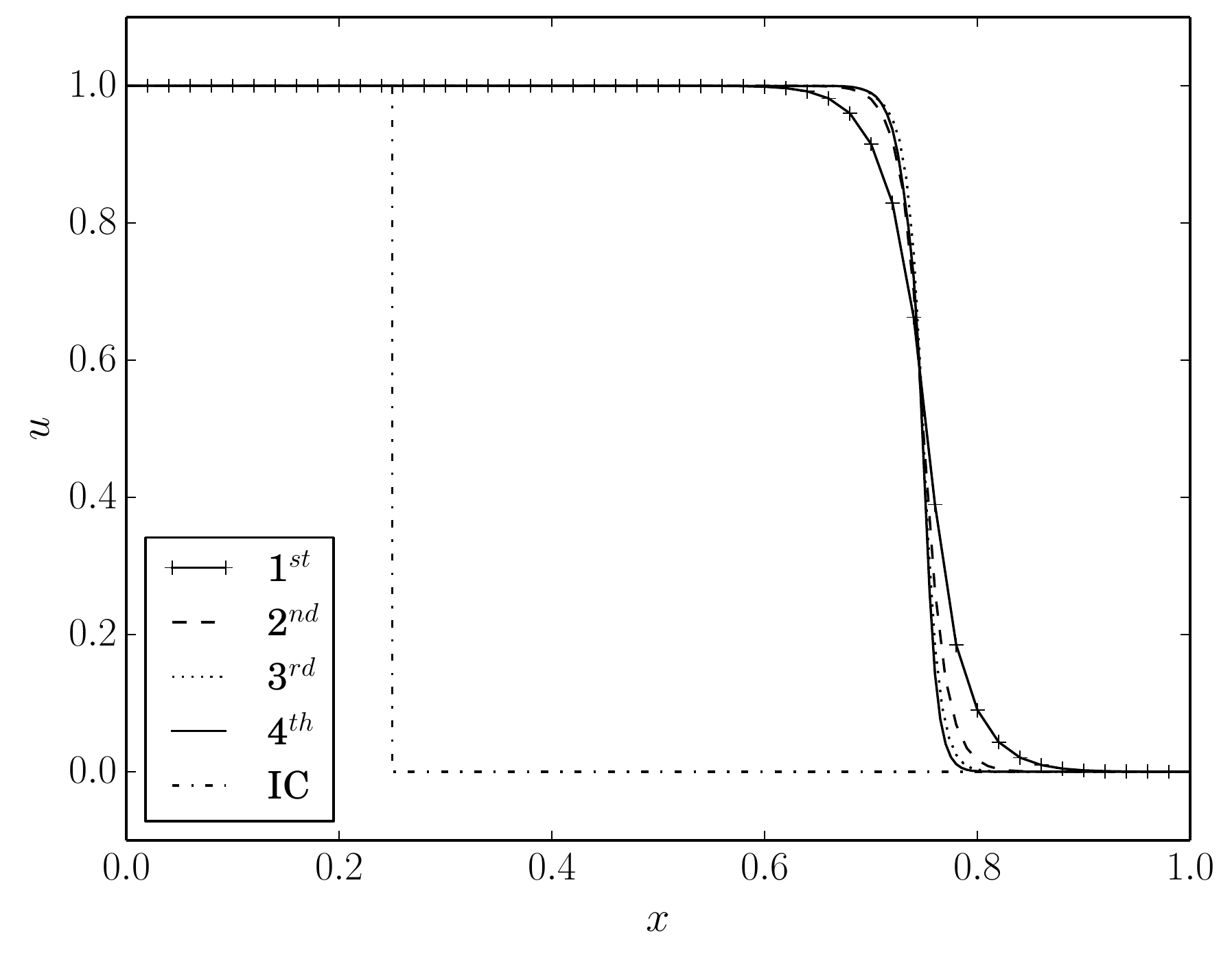}
		\caption{Solution using $k$-refinement, and time integration defined in Sect. \ref{sec:time-integration} with 5 partitions.}
		\label{sfig.1DconvPartitionedB}
	\end{subfigure}
	\caption{Solution of problem \eqref{eq.sharplayer} at $t=0.5$ for first to fourth order discretizations.}
	\label{fig.1DconvPartitioned}
\end{figure}

\subsection{Boundary layer} 
In this section the effect of the discretization order in a convection-diffusion problem is analyzed. To this end, we solve a problem with the propagation of a sharp layer and a boundary layer. In particular, we solve for $\domain\doteq[0,1]^2$
\begin{equation}\label{eq.layer} 
\left\{\begin{array}{rl}
-10^{-4}\Delta u + \conv\cdot\gradient u= 0 & \text{in } \domain \\
u = u_D & \text{at } \partial\domain
\end{array}\right.,
\end{equation}
where $\bsbeta=(\cos \theta,\sin \theta)$, $\theta=-\pi/3$, and the boundary conditions are defined as 
\begin{equation}
u_D = \left\{\begin{array}{cc}
\half+\frac{1}{\pi}\arctan\left(10^{-4}  \left(y-5/6\right)\right) & \text{if } y = 0 \\
0 & \text{otherwise}
\end{array}\right. .
\end{equation}
   
For this test, we use the following settings: a nonlinear tolerance of $10^{-8}$, $q=2$, $\varepsilon=10^{-8}$, $\sigma=10^{-6}$, and $\gamma=10^{-10}$. In Fig. \ref{fig.layer-3d}, the solution for $p=4$ is depicted. The converged solution does not exhibit any oscillation. Very sharp layers are obtained for this parameter setting. In Fig. \ref{fig.layer-cuts}, we show the profile of the solution at $y=0.1$ for different orders. In this case, we start with a discretization of 50 control points per direction. Then, we increase the order using the $k$-refinement used previously. As previously observed for transient problems, we observe an improvement of the solution as the order is increased.

\begin{figure}[h]
	\centering
	\begin{subfigure}[b]{0.48\textwidth}
		\includegraphics[width=0.90\textwidth]{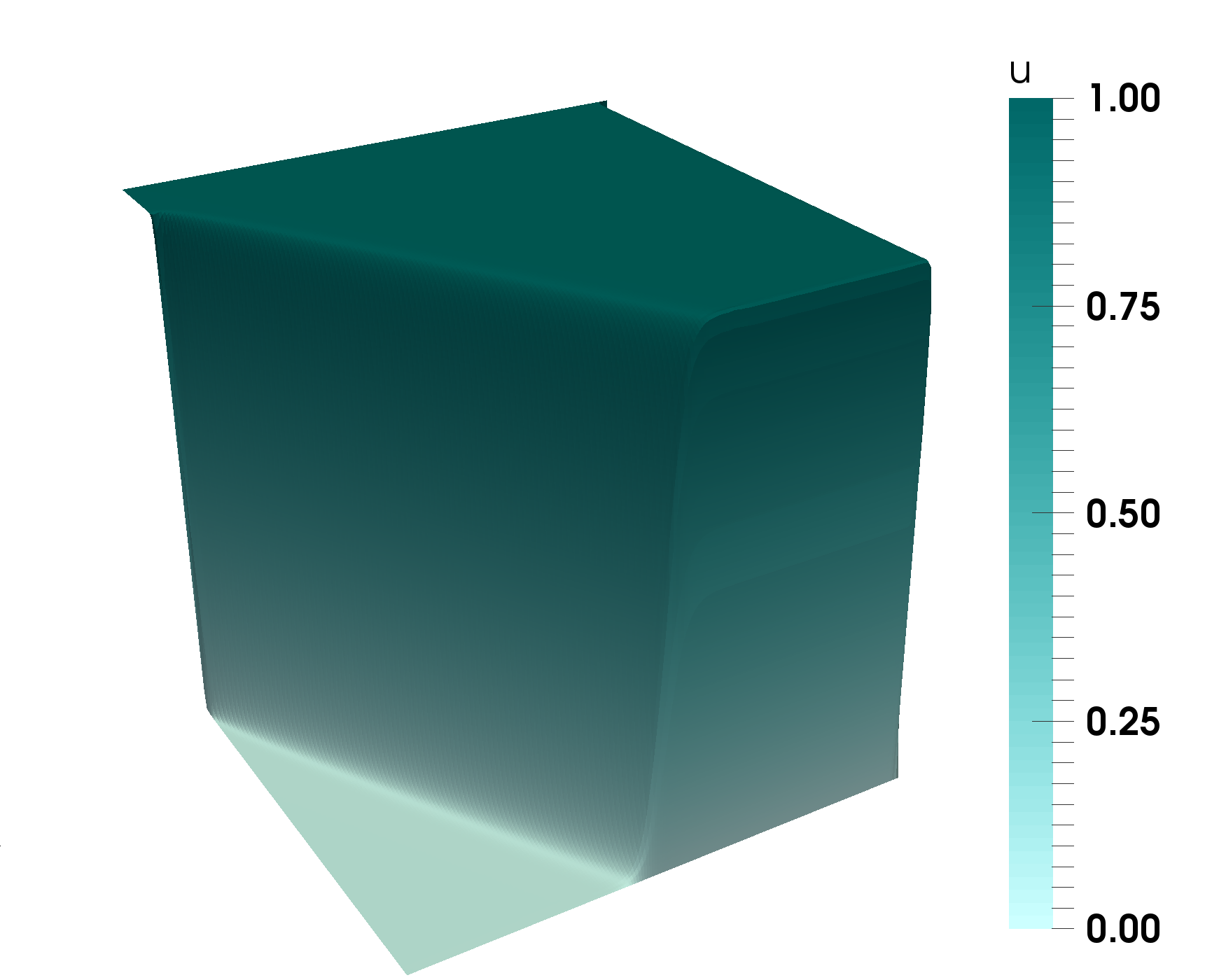}
		\vspace{6mm}
		\caption{3D representation of the solution.}
		\label{fig.layer-3d}
	\end{subfigure}
	\begin{subfigure}[b]{0.48\textwidth}
		\includegraphics[width=\textwidth]{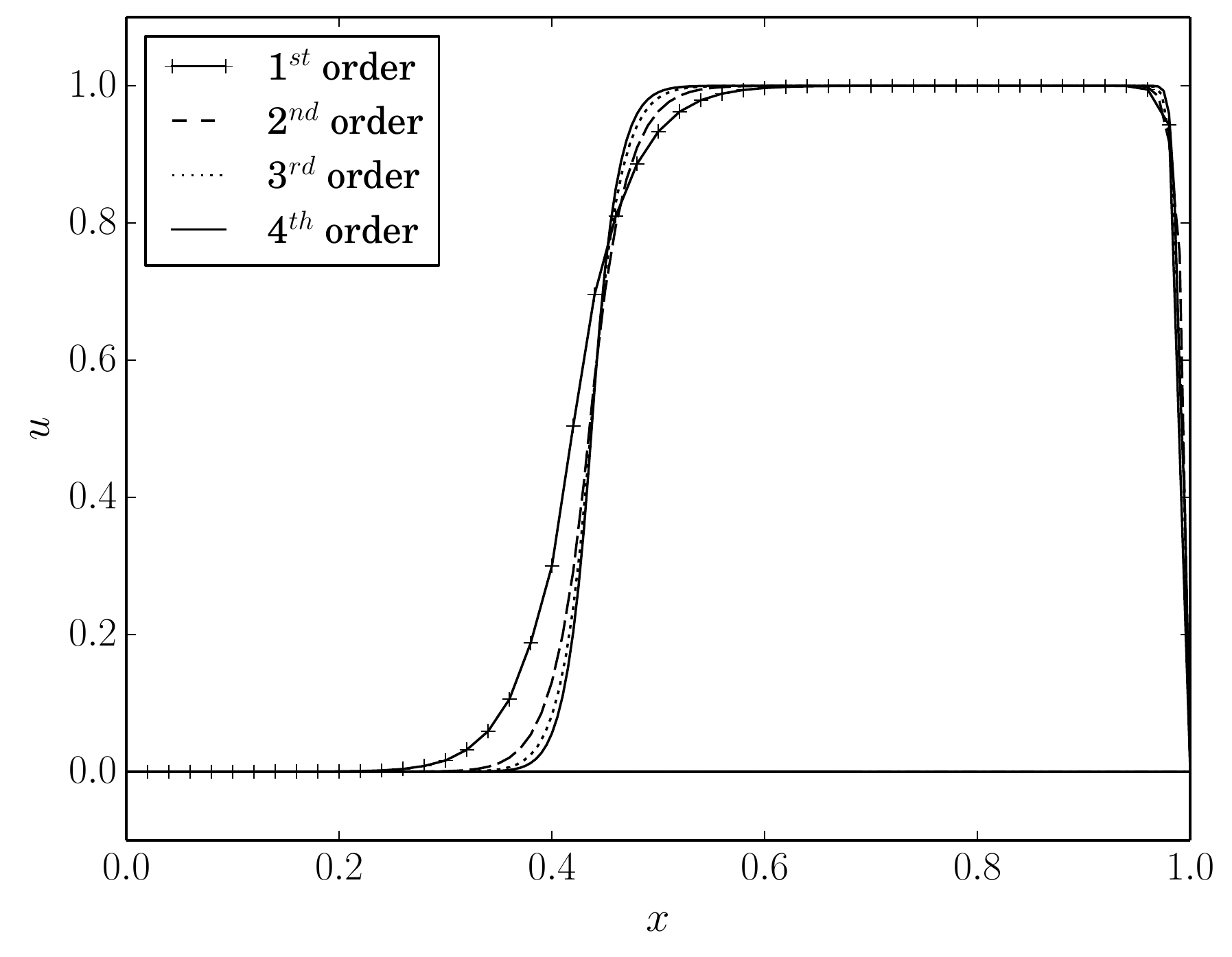}
		\caption{Profiles at $y=0.1$.}
		\label{fig.layer-cuts}
	\end{subfigure}
	\caption{Solution of problem \eqref{eq.layer} using scheme \eqref{eq.stabilized-problem}, and different discretization orders.}
\end{figure}

\subsection{Three Body rotation} 
Finally, we solve the transient pure convection problem \eqref{eq.sharplayer} in $\domain\times(0,1]$ for $\domain=[0,1]^2$, with $\conv=(-2\pi(y-0.5),2\pi(x-0.5))$. Initial conditions are given in \cite{Kuzmin2010}. Its interpolation in a first order $200\times 200$ control point mesh is depicted in Fig. \ref{fig.3body-ic}. The analytical solution of this problem is simply the translation of the profiles in the direction of the convection. In particular, for $t=1$, one revolution is completed and the solution is equal to the initial conditions. The purpose of this test is to evaluate how diffusive is the proposed scheme. We perform this evaluation evolving the solution until $t=1$ and comparing the results with the initial conditions.

The solution is computed using scheme \eqref{eq.stabilized-problem} in combination with the shock detector in \eqref{eq.smthdetector}. We use the following parameters for the stabilization: $q=10$, $\sigma=10^{-6}$, $\varepsilon=10^{-8}$, and $\gamma=10^{-10}$. 
Different meshes, time partitions, and discretization orders are used in this experiment. We start with a linear discretization of $100\times 100$ control points in space, and 500 in time divided in 125 subdomains. Then, we increase the discretization order to $p=2$ using the $k$-refinement. In order to compare first and second order discretizations, but using a similar number of control points we use a discretization with $200\times 200$ control points in space, and 1000 in time divided in 250 subdomains.
Finally, we assess the effect of the partitions in the temporal direction. We compare the previous discretization of $100\times 100\times 500$ control points divided in 125 subdomains, with the same discretization divided in 250 subdomains. We do the same comparison for the second order discretization using 125 subdomains and when it is divided in 250 subdomains.

\begin{figure}[h]
	\centering
	\begin{subfigure}[t]{0.48\textwidth}
		\includegraphics[width=\textwidth]{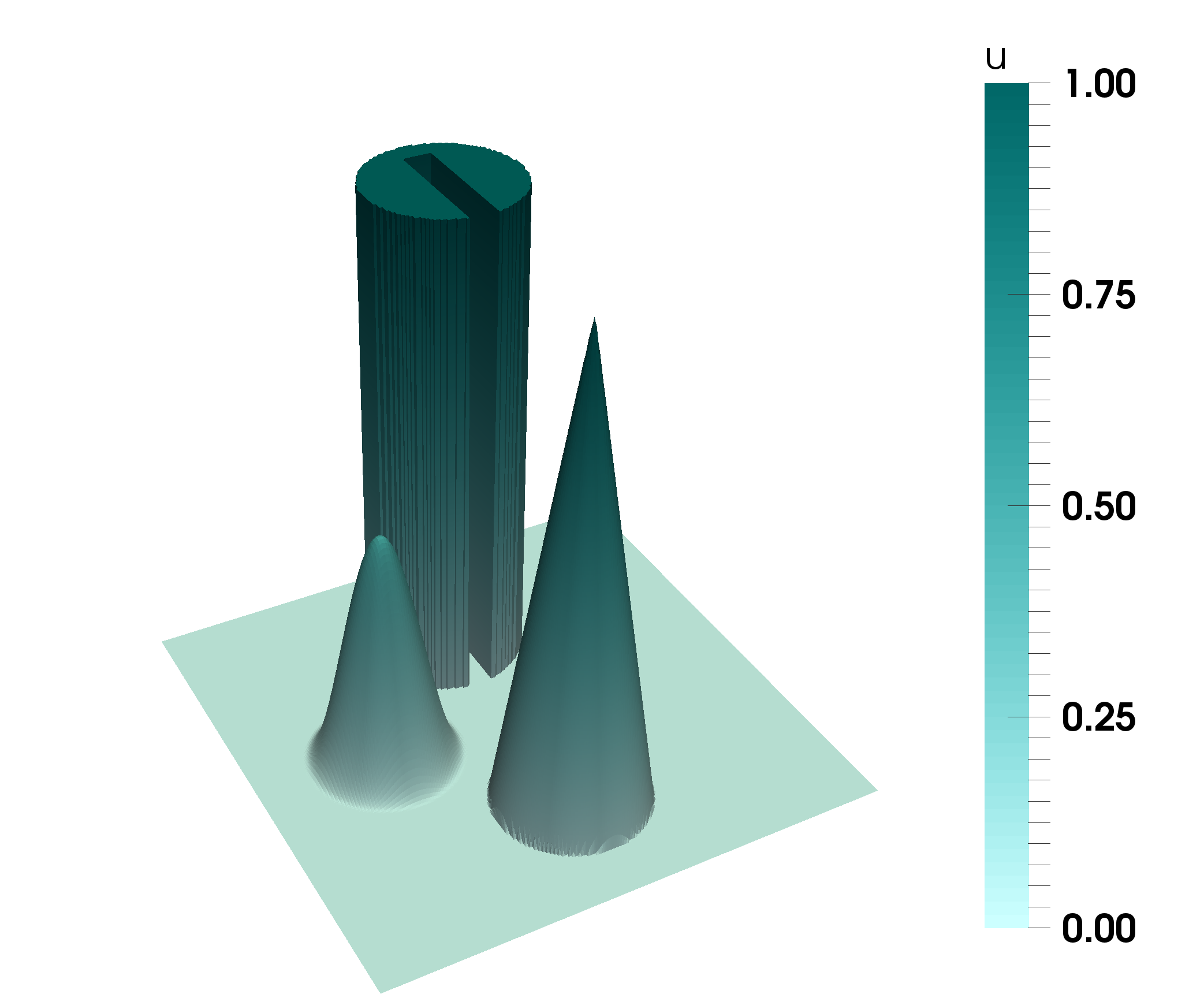}
		\caption{Initial conditions of the 3 body rotation.}
		\label{fig.3body-ic}
	\end{subfigure}
	\begin{subfigure}[t]{0.48\textwidth}
		\includegraphics[width=\textwidth]{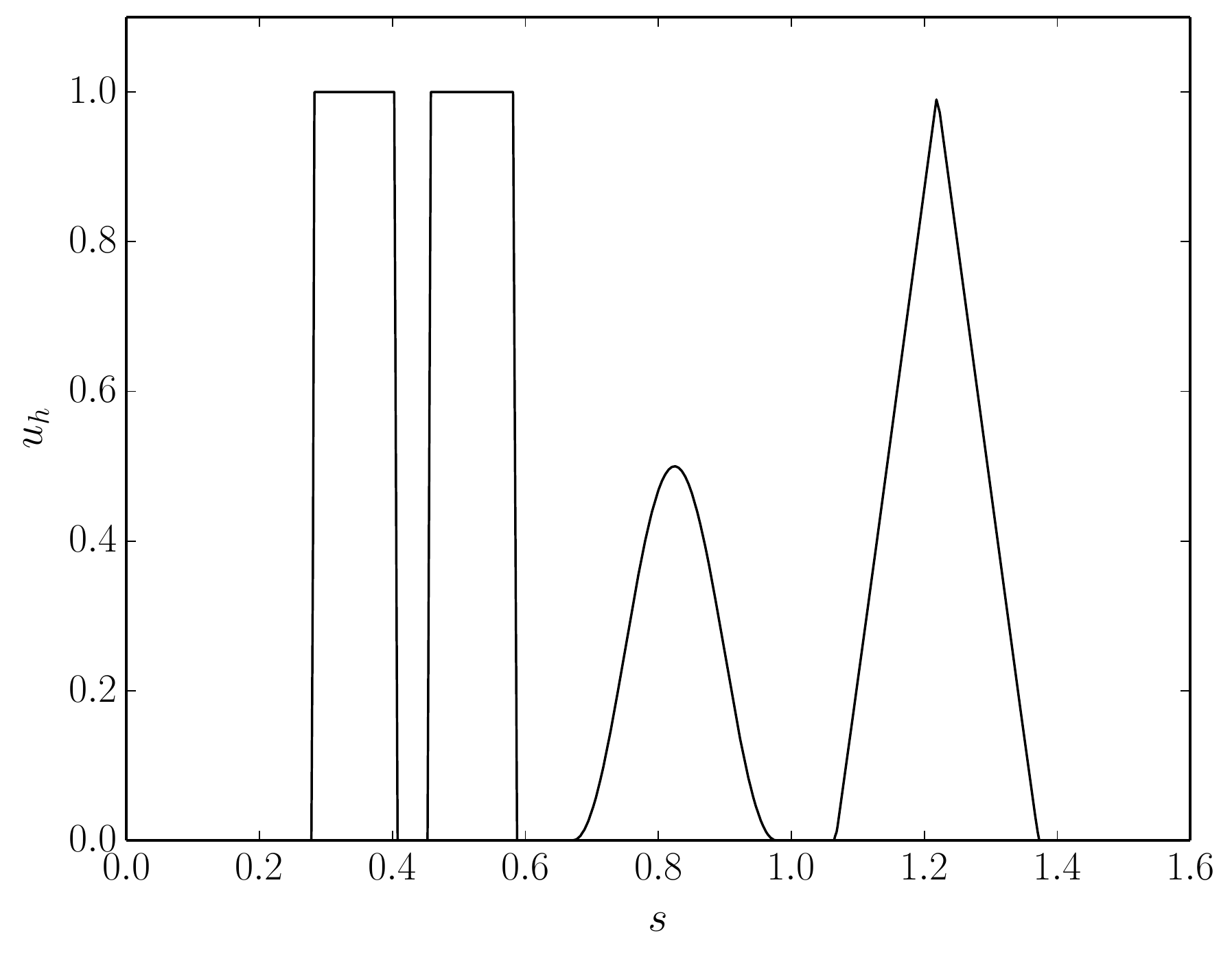}
		\caption{Profile at $\sqrt{(x-0.5)^2+(y-0.5)^2} = 0.25$.}
		\label{fig.3body-ics}
	\end{subfigure}
	\caption{Three body rotation test initial conditions. }
	\label{fig.3body-icfigs}
\end{figure}

Fig. \ref{fig.3body-4} shows the solutions for $100\times 100$ meshes, and 125 subdomains in time, whereas Fig. \ref{fig.3body-2} shows the ones for 250 subdomains. In both cases, a great improvement can be observed as we increase the discretization order. However, the computational cost is also increased. It is interesting to compare the solutions for first and second order discretizations using meshes with similar amount of control points, namely solutions at Fig. \ref{fig.3body-200-figs} and \ref{fig.3body-p2-4}. For this particular problem, using a higher order discretization with similar number of control points does not improve the solution, which it is actually slightly more diffusive for $p=2$. It is also worth mentioning that increasing the discretization order does not modify the behavior of the solution in terms of clipping or terracing. Comparing Figs. \ref{fig.3body-p1-2} and \ref{fig.3body-p1-4}, we observe that the scheme becomes more dissipative as the number of partitions is increased. This is even clearer in Fig. \ref{fig.3body-slices}, where the profile of the solution at $s\doteq\{(x,y) : \sqrt{(x-0.5)^2+(y-0.5)^2} = 0.25\}$ is depicted.

\begin{figure}[h]
	\centering
	\begin{subfigure}[t]{0.48\textwidth}
		\includegraphics[width=\textwidth]{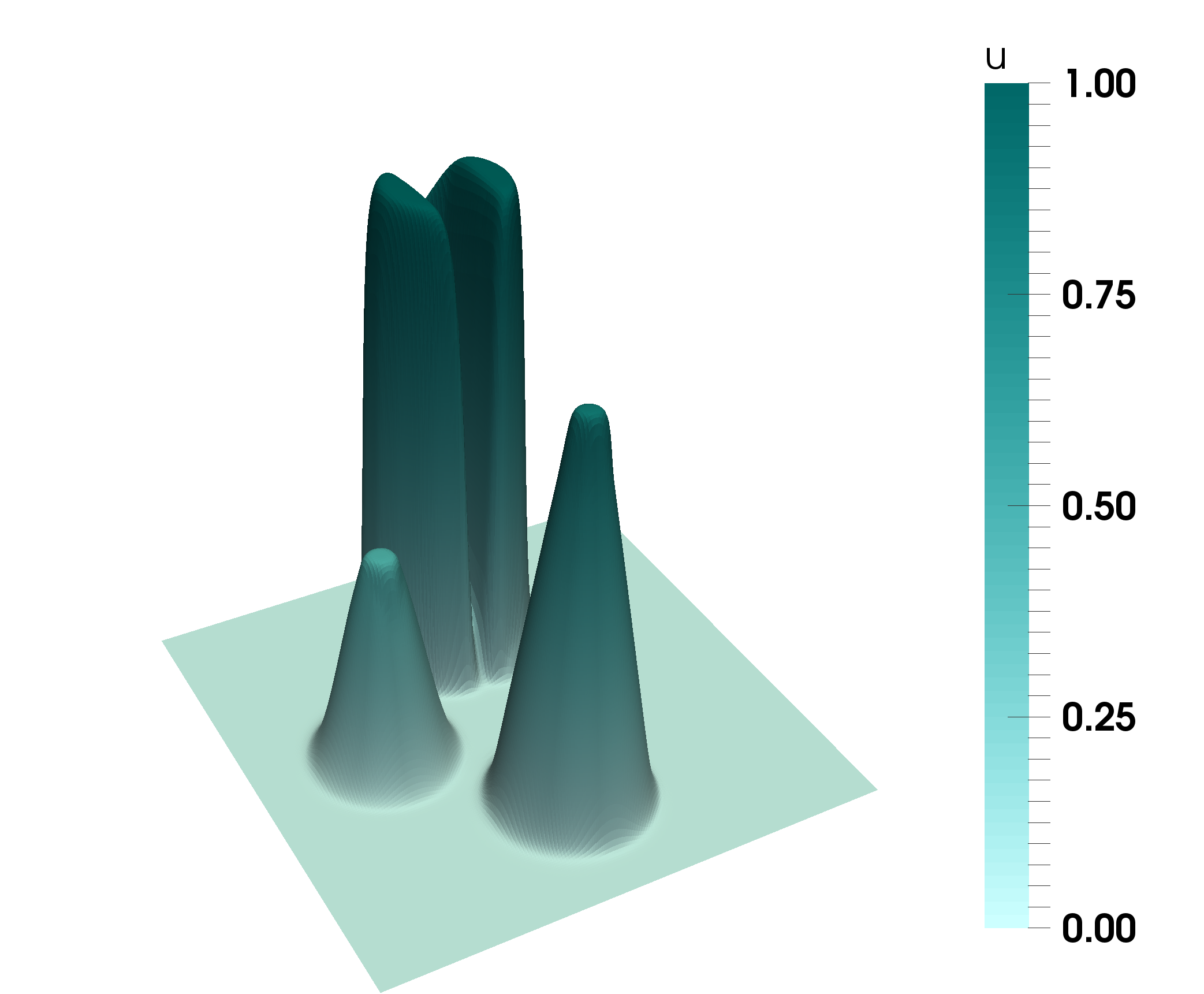}
		\caption{3D view.}
		\label{fig.3body-200}
	\end{subfigure}
	\begin{subfigure}[t]{0.48\textwidth}
		\includegraphics[width=\textwidth]{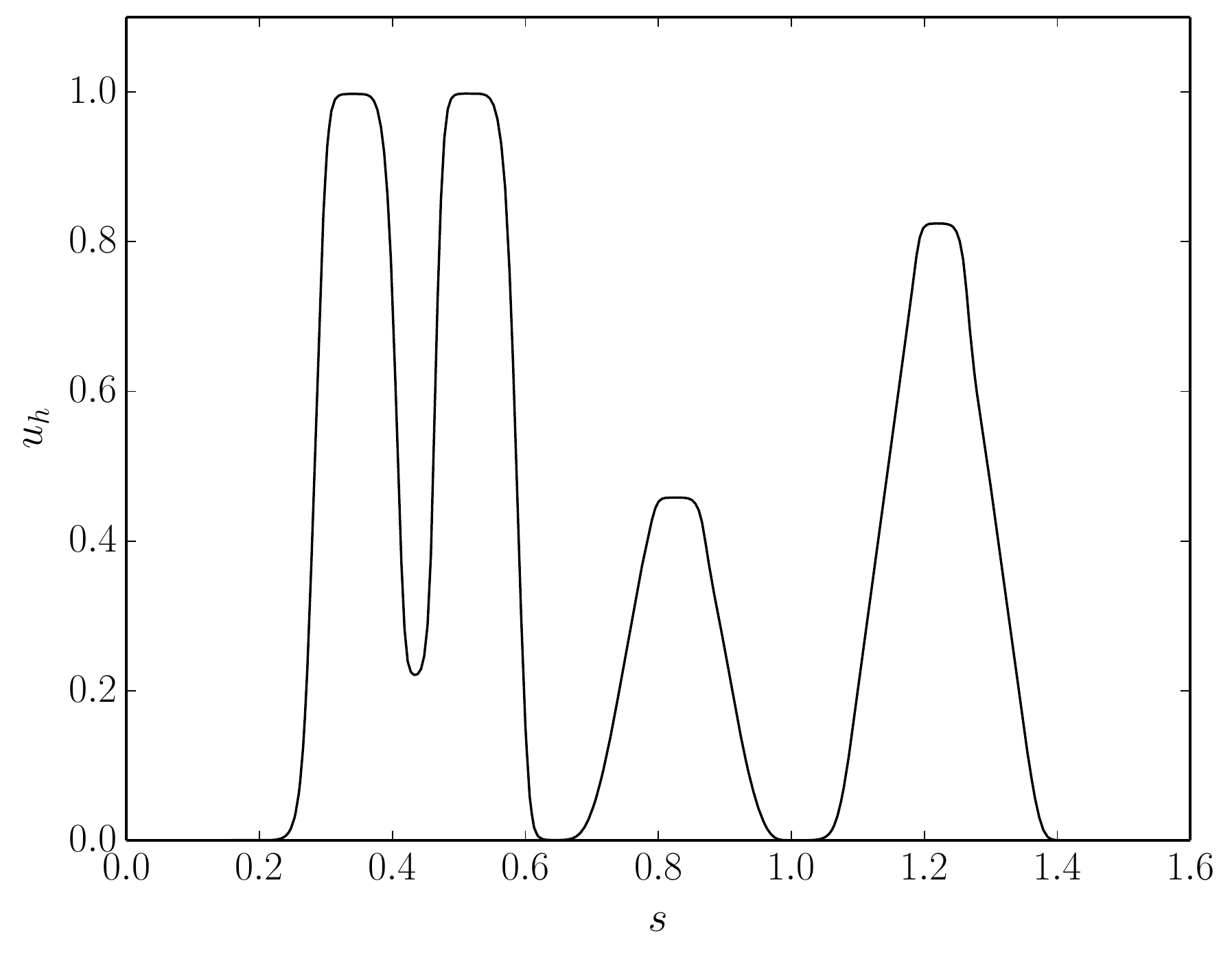}
		\caption{Profile at $\sqrt{(x-0.5)^2+(y-0.5)^2} = 0.25$.}
		\label{fig.3body-200s}
	\end{subfigure}
	\caption{Three body rotation test results at $t=1$ using scheme \eqref{eq.stabilized-problem}, $q=10$, $\sigma=10^{-6}$, $\varepsilon=10^{-8}$, and $\gamma=10^{-10}$. A first order discretization of $200\times 200\times 1000$ control points is used with 250 subdomains in the temporal direction.}
	\label{fig.3body-200-figs}
\end{figure}

\begin{figure}[h]
	\centering
	\begin{subfigure}[t]{0.48\textwidth}
		\includegraphics[width=\textwidth]{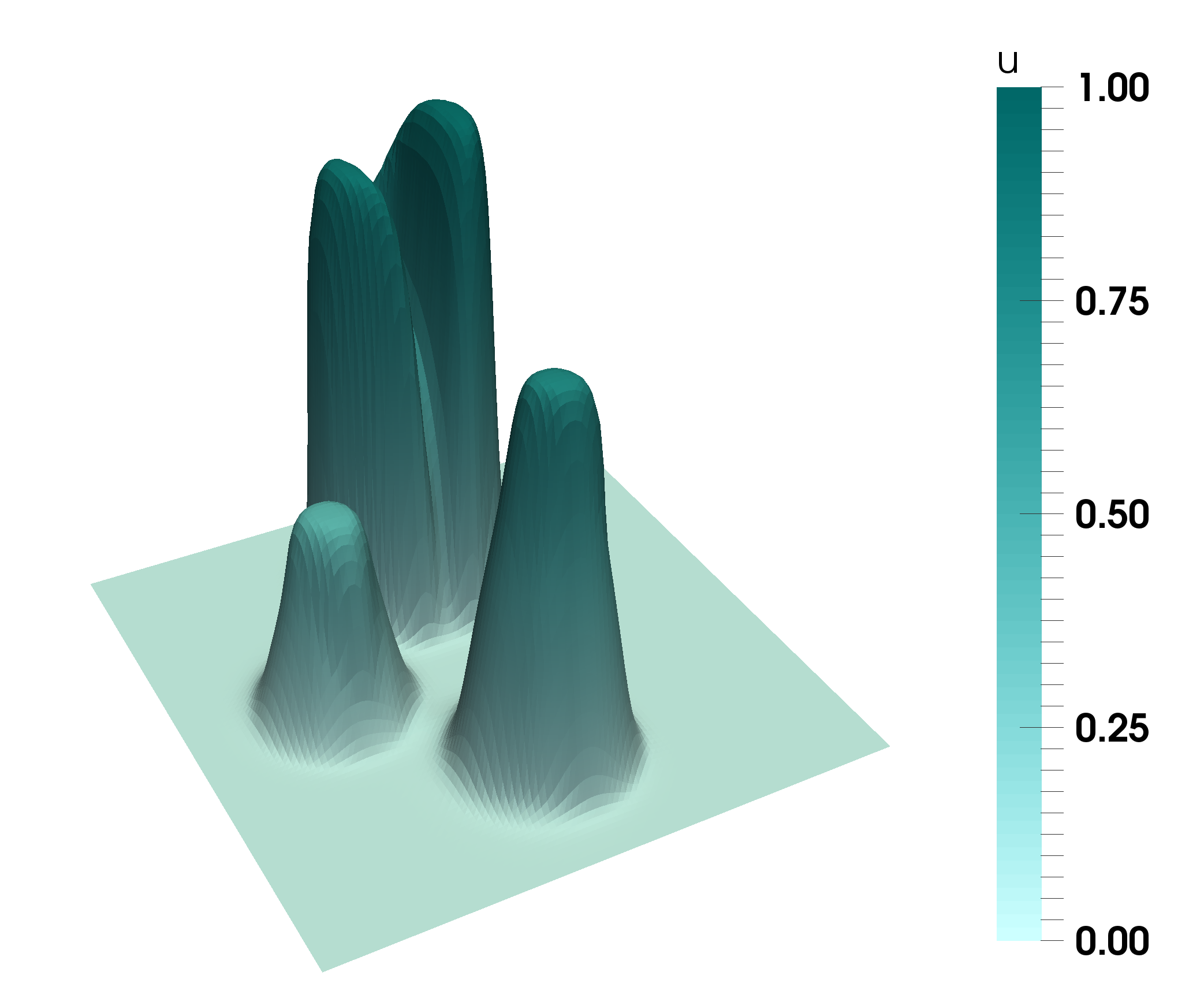}
		\caption{Solution for a first order discretization.}
		\label{fig.3body-p1-4}
	\end{subfigure}
	\begin{subfigure}[t]{0.48\textwidth}
		\includegraphics[width=\textwidth]{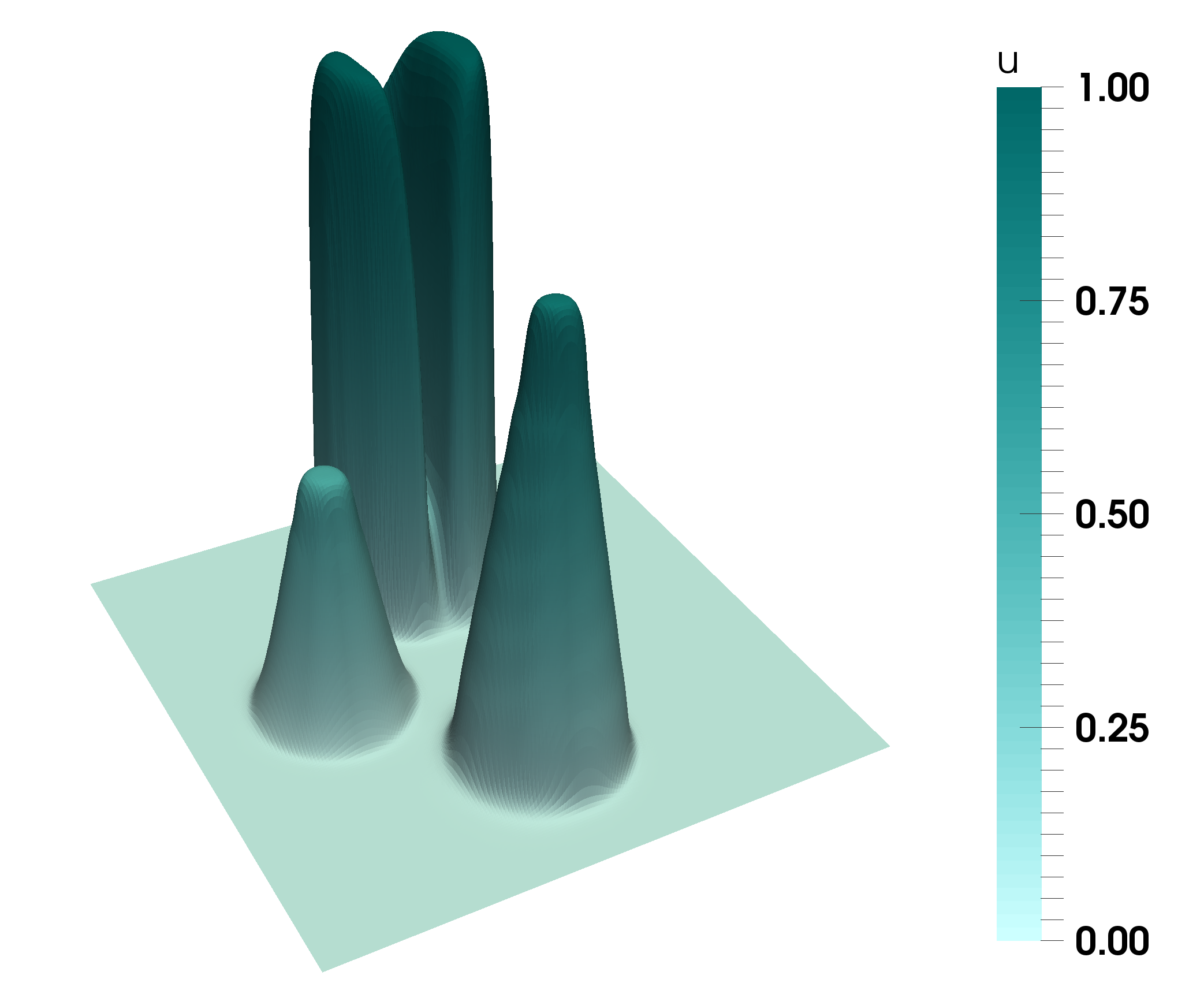}
		\caption{Solution after one $k$-refinement.}
		\label{fig.3body-p2-4}
	\end{subfigure}
	\caption{Three body rotation test results at $t=1$ using scheme \eqref{eq.stabilized-problem}, $q=10$, $\sigma=10^{-6}$, $\varepsilon=10^{-8}$, and $\gamma=10^{-10}$. A first order discretization of $100\times 100\times 500$ control points is used. The second order discretization is obtained using $k$-refinement. 125 subdomains in the temporal direction have been used.}
	\label{fig.3body-4}
\end{figure}

\begin{figure}[h]
	\centering
	\begin{subfigure}[t]{0.48\textwidth}
		\includegraphics[width=\textwidth]{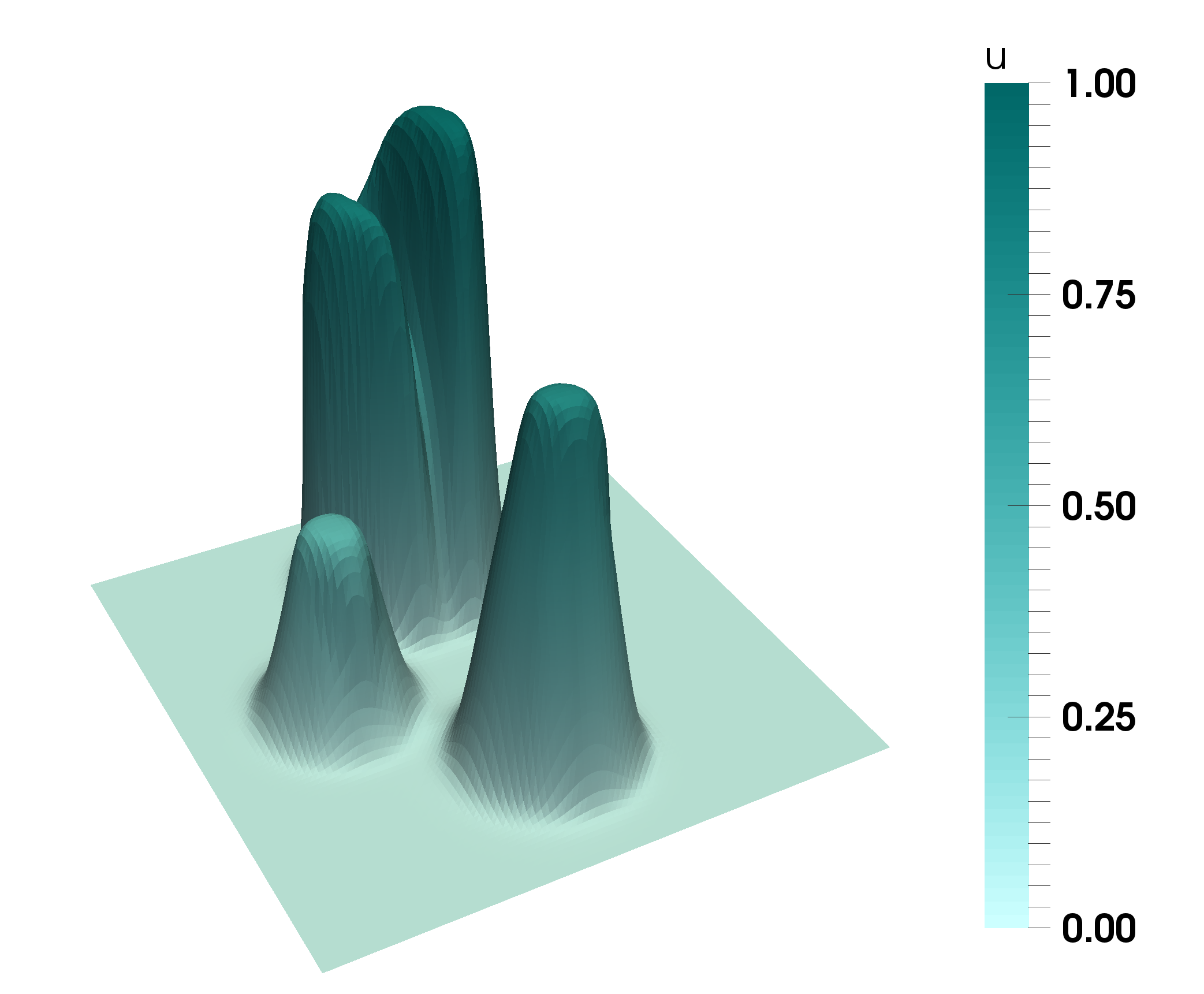}
		\caption{Solution for first order discretization.}
		\label{fig.3body-p1-2}
	\end{subfigure}
	\begin{subfigure}[t]{0.48\textwidth}
		\includegraphics[width=\textwidth]{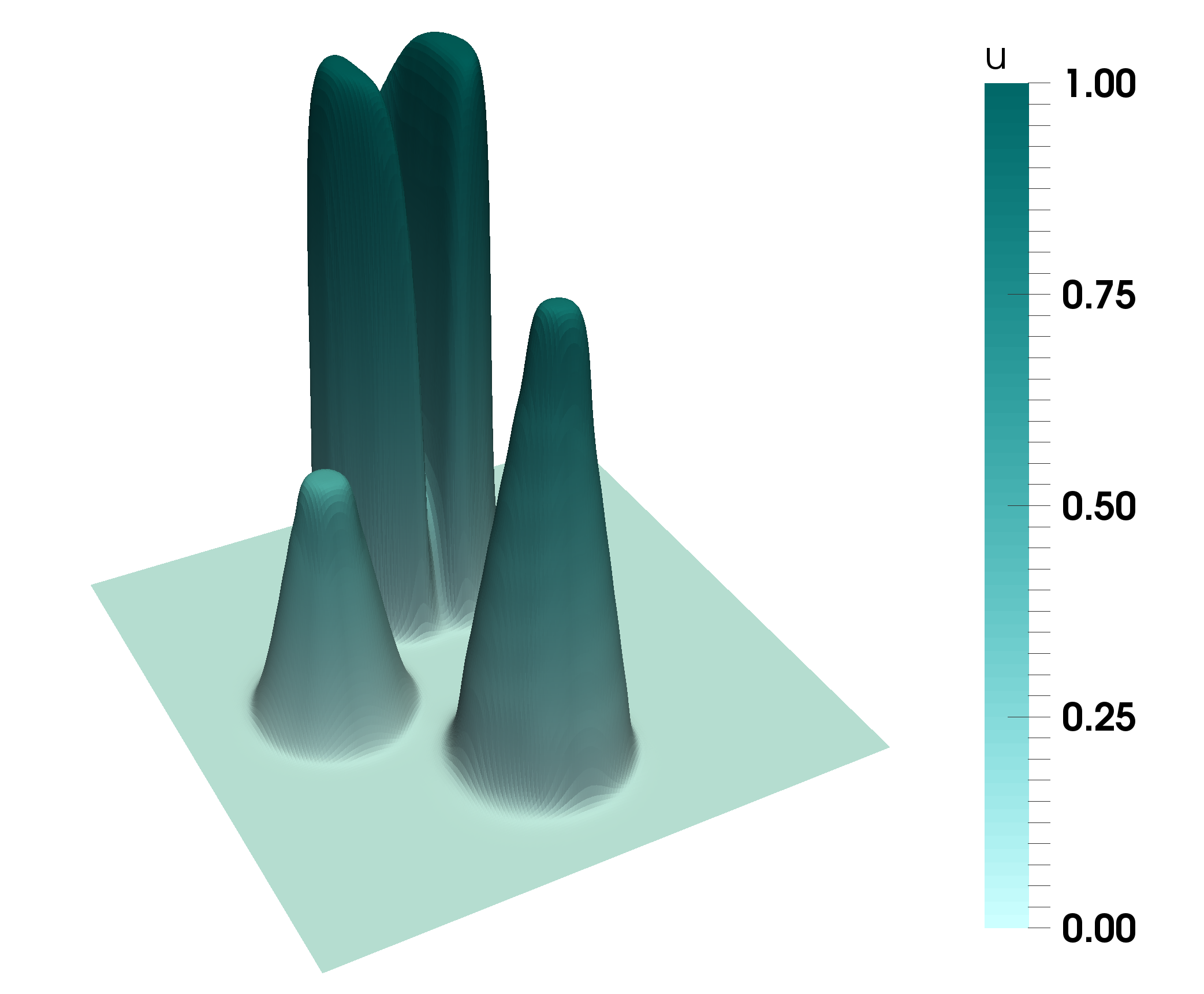}
		\caption{Solution after one $k$-refinement.}
		\label{fig.3body-p2-2}
	\end{subfigure}
	\caption{Three body rotation test results at $t=1$ using scheme \eqref{eq.stabilized-problem}, $q=10$, $\sigma=10^{-6}$, $\varepsilon=10^{-8}$, and $\gamma=10^{-10}$. A first order discretization of $100\times 100\times 500$ control points is used. The second order discretization is obtained using $k$-refinement. 250 subdomains in the temporal direction have been used.}
	\label{fig.3body-2}
\end{figure}

\begin{figure}[h]
	\centering
	\begin{subfigure}[t]{0.48\textwidth}
		\includegraphics[width=\textwidth]{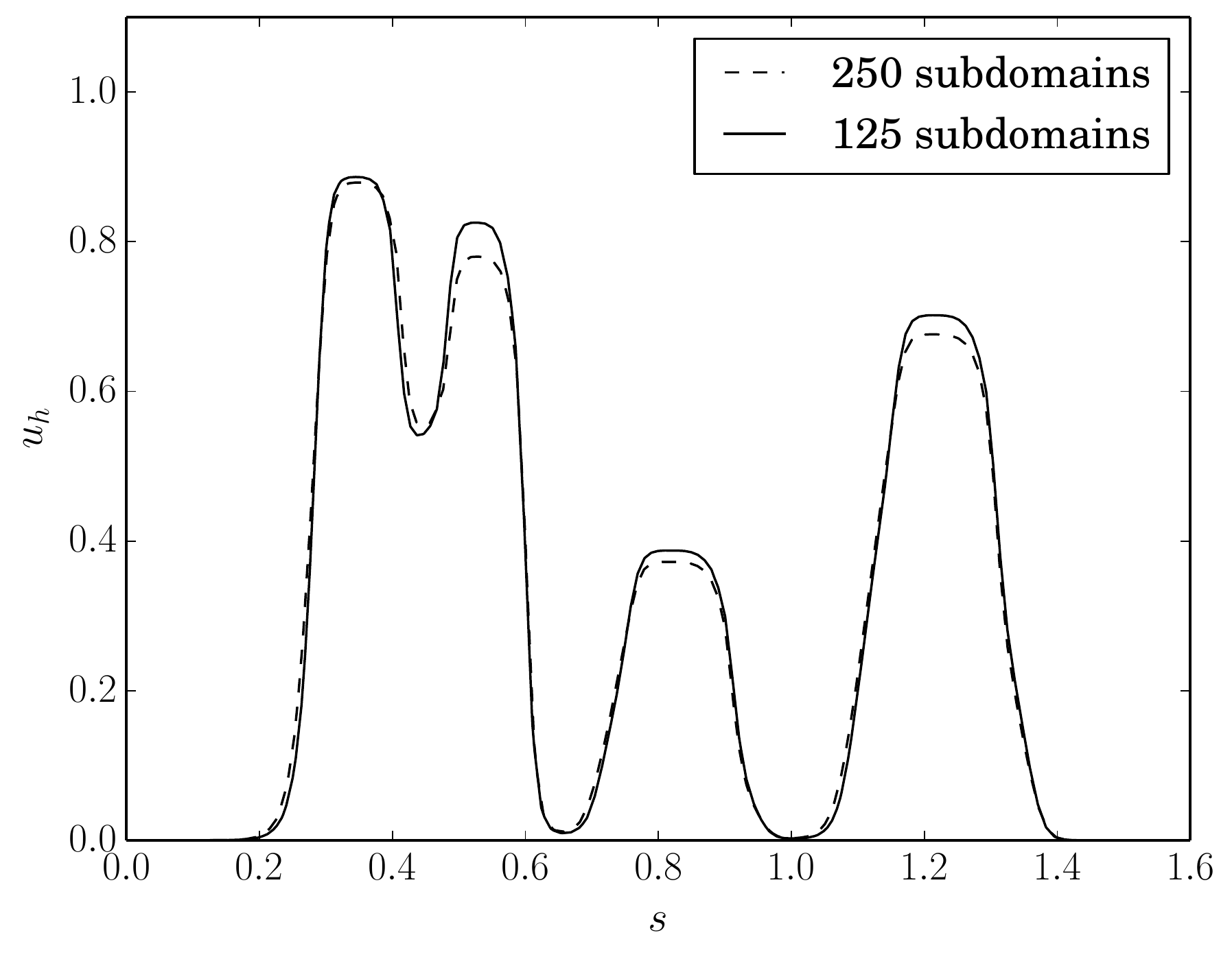}
		\caption{Solution for first order discretization.}
		\label{fig.3body-slice-p1}
	\end{subfigure}
	\begin{subfigure}[t]{0.48\textwidth}
		\includegraphics[width=\textwidth]{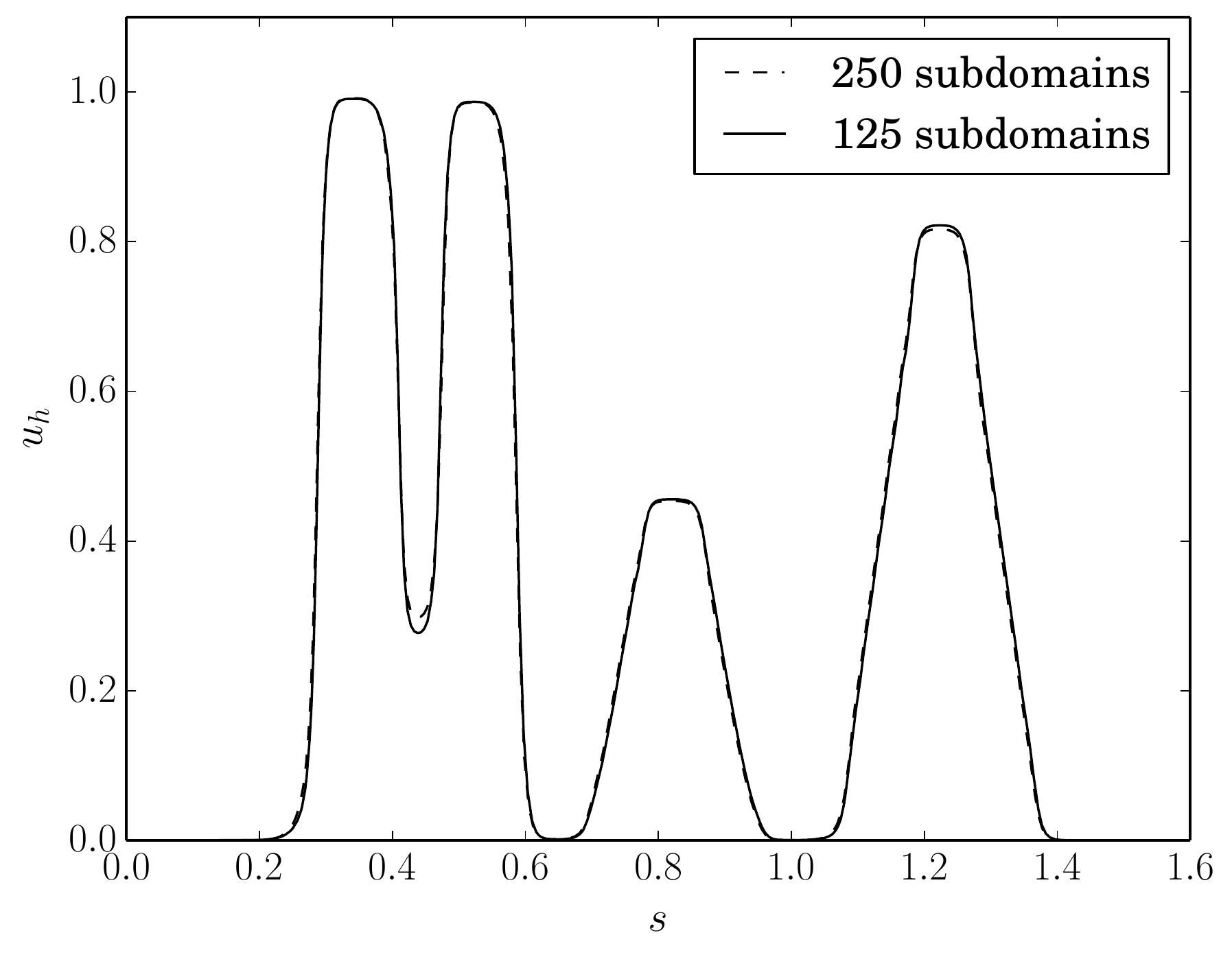}
		\caption{Solution after one $k$-refinement.}
		\label{fig.3body-slice-p2}
	\end{subfigure}
	\caption{Three body rotation test profiles for $t=1$ at $\sqrt{(x-0.5)^2+(y-0.5)^2} = 0.25$ using scheme \eqref{eq.stabilized-problem}, $q=10$, $\sigma=10^{-6}$, $\varepsilon=10^{-8}$, and $\gamma=10^{-10}$. A first order discretization of $100\times 100\times 500$ control points is used. The second order discretization is obtained using $k$-refinement. 125 and 250 subdomains in the temporal direction have been used.}
	\label{fig.3body-slices}
\end{figure}

\section{Conclusions}\label{sec.conclusions}

In the present work, an extension of \cite{badia_monotonicity-preserving_2017} to isogeometric analysis methods have been developed. The proposed method is unconditionally \ac{DMP} preserving for arbitrary high-order discretizations in space and time without any CFL-like condition. Furthermore, it is shown to be linearity-preserving in a space--time sense. Moreover, the regularized version is shown to yield better convergence behavior, specially when for the hybrid Picard--Newton method.

Moreover, the numerical experiments show that increasing the discretization order yield much better solutions. However, as the order is increased the number of control points and the computational cost is also increased. On the contrary, if the order is increased while the number of control points is fixed, then similar or even slightly more diffusive results are obtained for non-smooth solutions. Hence, for problems with regions of smooth and non-smooth solutions a high-order is expected to outperform linear discretizations with similar amount of control points. Furthermore, a method capable of providing solutions that satisfy the \ac{DMP} for high-order discretizations is of special interest in $hp$-adaptive schemes, since the usage of first order discretizations in shocks is not required.

In addition, a partitioned scheme that does not harm any monotonicity property is presented. This scheme reduces significantly the computational cost of the original space--time scheme. It is important to mention, that this partitioning slightly increases the error. However, this approach allows finer meshes, and thus, in practice better solutions can be obtained.

\section*{Acknowledgments}
J. Bonilla gratefully acknowledges the support received from ”la Caixa” Foundation through
its PhD scholarship program (LCF/BQ/DE15/10360010). S. Badia gratefully acknowledges the support received 
from the Catalan Government through the ICREA Acad\`emia Research Program. We acknowledge the financial support to CIMNE via the CERCA Programme  / Generalitat de Catalunya.

\bibliographystyle{siam}
\bibliography{refs}

\end{document}